%°´£º
%Ò»¡¢×¢²áºó¿ÉʹÓ÷´ÏòËÑË÷¡£·½·¨ÂÔ¡£
%Ò»¡¢½â¾ö WinEdt ĬÈÏ×ÖÌå̫С£º¹¤¾ßÀ¸  Options --> Options --> ×óÀ¸ÖÐÑ¡Ôñ Font Schemes: Font,¡­¡­
%                                     --> Font --> Font.ini ÎļþÖн« FONT_SIZE=10 ¸ÄΪ 12¡¢13 »ò
%                                                  14£¬ctrl+s ±£´æÎļþ
%                                     --> Êó±êÒƶ¯ÖÁ Font ͼ±êÉÏÓÒ¼üµ¥»÷ --> Load Script. Íê±Ï¡£                                        %                                     £¨Òà¿É³¢ÊÔÆäËü×ֺţ©
%·É×¢£º¿ÉÉèÖà Font_Name=Courier New ×ÖÌåΪ Consolas; Font_Size=12¡£

%Ò»¡¢ÏÔʾÐкţºÔڱ༭Æ÷×ó±ß¿ÕÀ¸£¬ÓÒ¼üµ¥Ñ¡ Show Line Numbers.
%Ò»¡¢ÊéÇ©£ºÍ¬ÉÏ£¬ÓÒ¼üµ¥Ñ¡ Set Bookmark.
%Ò»¡¢»·¾³¿ì½ÝÊäÈ룺\begin{XXX} ºóÊäÈëÓÒ´óÀ¨ºÅ } £¬¼´ÏÔʾ \end{XXX}£»ÈçÍü¼ÇÁË£¬ÔòÊäÈë \end ºóÊäÈëÁ½¸ö×ó´óÀ¨ºÅ { { £¬WinEdt»á×Ô¶¯Æ¥Åä×î½üÒ»¸öδƥÅäµÄ\begin{XXX}.

%ÒÔÉϸ÷Ìõ£¬½öΪ˵Ã÷¡£Öªºó¼´É¾¡£

% Latex ĬÈÏ×ֺŠ10pt,ĬÈÏÖ½ÕÅ±È A4 ÂÔС£¬Òò´ËÏÂÒ»ÐÐÈçÏë±ä¶¯×ֺţ¬Ó¦½«ÖÐÀ¨ºÅÀïÄÚÈݱäΪ [a4paper,11pt,...]Ö®Àà¡£

\documentclass[10pt,leqno]{article}

%------------------------ Êýѧºê°ü --------------------------
\usepackage{amsmath,amssymb,amsthm,mathrsfs,dsfont,graphicx}

%------------------------ Ò³±ß¾à --------------------------
\usepackage[margin=3cm]{geometry} %¿í°æʱÓã¬ÅäºÏË«±¶Ðоà

%------------------------ ±êÌâÑùʽ£¬±êºÅ³¬Á´½Ó --------------------------
\usepackage{titlesec,hyperref}

\usepackage{color}
%\usepackage{CCT}

%------------------------ ҳü£ºÓÒÉÏÏÔʾµ±Ç° section --------------------------
\usepackage{fancyhdr}
\pagestyle{fancy}
\lhead{}

\usepackage{amsmath,amssymb} % ?????????\mathbb{R}?
\usepackage{diagbox} % ???????\diagbox??

%------------------------ Ë«±¶Ðмä¾à --------------------------
\linespread{1.2}

%------------------------ ±êÌâÉèÖÃ--------------------------
\titleformat{\subsection}{\it}{\thesubsection.\enspace}{1pt}{}

%------------------------ ¶¨Àí±êºÅ--------------------------
\newtheorem{theo}{Theorem}[section]
\newtheorem{lemm}[theo]{Lemma}
\newtheorem{defi}[theo]{Definition}

\newtheorem{prop}[theo]{Proposition}
\newtheorem{rema}[theo]{Remark}
\numberwithin{equation}{section}

\allowdisplaybreaks %ÔÊÐí¹«Ê½·ÖÒ³ÏÔʾ

 %¿É¶¨ÒåÒ»¸ö¼òµ¥·ûºÅÀ´´úÌæºÜ³¤¶ø³£ÓõÄÃüÁî¡£

\makeatletter
\tagsleft@false  % ??????
\makeatother
%--------------------------------------------------
%---------------------ÕýÎÄ-------------------------
%--------------------------------------------------

\begin{document}
\title{Blow-up phemomenon for the Geng-Xue system and related models
\hspace{-4mm}
}

\author{Song $\mbox{Liu}^1$ \footnote{Email: lius37@mail2.sysu.edu.cn} \quad and\quad
	Zhaoyang $\mbox{Yin}^1$\footnote{E-mail: mcsyzy@mail.sysu.edu.cn}\\
	$^1\mbox{School}$ of Science,\\ Shenzhen Campus of Sun Yat-sen University, Shenzhen, 518107, China}

\date{}
\maketitle
\hrule

\begin{abstract}
In this paper, we consider the Cauchy problem of the Geng-Xue system with cubic nonlinearity. Firstly, we prove a blow-up criteria in the besov space with low regularity. Secondly, we prove the blow-up phenomenon by using the method which does not require any conservation law. Finally, we extend our results to the b-family of two-component system with cubic nonlinearity.   \\
\vspace*{5pt}
\noindent {\it 2020 Mathematics Subject Classification}: 35B44, 35L05, 35G05.%pde??????????????pde

\vspace*{5pt}
\noindent{\it Keywords}: Geng-Xue system, blow-up criteria, blow-up.
\end{abstract}

\vspace*{10pt}

%\phantomsection
%\addcontentsline{toc}{section}{\contentsname}
%Ìí¼ÓĿ¼µ½ÊéÇ©
\tableofcontents

\section{Introduction}
~~~~In this paper, we study the Cauchy problem for the Geng-Xue system with cubic nonlinearity \cite{geng2009extension} %1
\begin{align}\label{(1.1)}
	\begin{cases}m_t+u\upsilon m_x+3\upsilon u_xm=0,\\n_t+u\upsilon n_x+3u\upsilon_xn=0,\\m=u-u_{xx},\quad n=v-v_{xx}.\end{cases}
\end{align}
The Geng-Xue system was first constructed by Geng and Xue using the zero-curvature equation method, which is a new CH-type integrable system. The authors established its Hamiltonian structure and proved that it also admits peakons. Himonas and Mantzavinos studied the well-posedness of the Geng-Xue equations in \cite{himonas2016novikov}, and they also pointed out that the data-to-solution map for the initial value problem of the Geng-Xue equation is not uniformly continuous in \(H^s \times H^s\) with \(s < \frac{3}{2}\).

For $v \equiv 1,$ the system \eqref{(1.1)} becomes the following well-known Degasperis-Procesi (DP) equation \cite{degasperis1999asymptotic} %2
\begin{align*}
	m_t + u m_x + 3u_x m = 0, \quad m = u - u_{xx}.
\end{align*}
The DP equation is recognized as an alternative model for describing nonlinear shallow water dynamics \cite{constantin2010inverse,constantin2009hydrodynamical}. As established in \cite{degasperis2002new}, this equation possesses a bi-Hamiltonian structure and an infinite hierarchy of conservation laws, and it admits peakon solutions analogous to those of the Camassa-Holm (CH) equation. The CH equation \cite{camassa1993integrable,camassa1994new}, defined as 
\begin{align*}
	m_t + u m_x + 2u_x m = 0, \quad m = u - u_{xx},
\end{align*}
which is similar to the DP equation and has long been a standard for studying peakon movement, integrable structures and singularity formation in nonlinear dispersive systems \cite{bressan2007global,danchin2001remarks}. Like the CH equation, the DP equation has been extended to a completely integrable hierarchy through a \(3 \times 3\) matrix Lax pair, which allows for an involutive representation of solutions under a Neumann constraint on a symplectic submanifold \cite{qiao2004integrable}; further studies have confirmed the existence of algebro-geometric solutions for this \(3 \times 3\) integrable system \cite{hou2013algebro}, building on analogous results for the CH equation's \(2 \times 2\) Lax pair formulation \cite{holm1998euler}. Extensive research has been devoted to the Cauchy problem and initial boundary value problem of the DP equation, as documented in \cite{coclite2006well,escher2009initial,yin2003cauchy,yin2004global}.

For $v \equiv u,$ the system \eqref{(1.1)} becomes the following Novikov equation which was proposed in \cite{novikov2009generalizations} %2
\begin{align*}
	m_t + u^2 m_x + 3u u_x m = 0, \quad m = u - u_{xx}.
\end{align*}
Notably, the Novikov equation is an integrable peakon system woth cubic nonlineaity admitting peakon solutions. Furthermore, extensive investigations have been conducted on its well-posedness, blow-up phenomena, and ill-posedness, as documented in \cite{himonas2012cauchy,himonas2013holder,Li2025}.

In view of the significance of conservation laws in studying the blow-up phenomena and global existence, we shall summarize two useful conservation laws which
are for the DP and Novikov equations in the Table 1. But unfortunately, the below two conservation laws (the $H^1\left(\mathbb{R}\right)$ and $L^1\left(\mathbb{R}\right)$ norms) are unavailable to the system \eqref{(1.1)}.
\begin{table}[h]
	\centering
	\caption{Conservation laws}
	\begin{tabular}{|c|c|c|c|}
		\hline
		\diagbox{Conservation laws}{Equations} & DP equation & Novikov equation \\
		\hline
		Is $\int_{\mathbb{R}} (um)(t,x)dx$ conserved?  & no & yes \\
		\hline
		Is $\int_{\mathbb{R}} m(t,x)dx$ conserved?  & yes & no \\
		\hline
	\end{tabular}
	\label{tab:conservation_laws}
\end{table}\\

Finally,  consider the following $b$-family of two-component system with cubic nonlinearity proposed in \cite{yan2018new}
\begin{align}\label{(1.2)}
	\begin{cases}m_t+u\upsilon m_x+b\upsilon u_xm=0,\\n_t+u\upsilon n_x+bu\upsilon_xn=0,\\m=u-u_{xx},\quad n=v-v_{xx},\end{cases}
\end{align}
which is the generalization of the Geng-Xue system \eqref{(1.1)}. And Yan et al. \cite{yan2018new} have already established its local well-posedness, blow-up criteria and so on.

  \subsection{Main result}
    Now let us set up the Cauchy problem. Using the Green function $p(x)\triangleq\frac{1}{2}e^{-|x|},x\in\mathbb{R}$ and the identity $D^{-2}f=p*f$ for all $f\in L^2(\mathbb{R})$ with $D^{s}= (1-\partial_x^2)^{\frac{s}{2}}$,  we can rewrite system \eqref{(1.1)} with the initial data $(u_0,v_0)$ as the following form
    \begin{align}\label{(1.3)}
    	\begin{cases}
    		u_t + uvu_x + p * (3 u v u_x + 2uv_x u_{xx} + 2u_x^2 v_x + uv_{xx}u_x) = 0, \\
    		v_t + vuv_x + p * (3 v u v_x + 2vu_x v_{xx} + 2v_x^2 u_x + vu_{xx}v_x) = 0, \\
    		u(0, x) = u_0(x), \quad v(0, x) = v_0(x).
    	\end{cases}
    \end{align}
	We then have the following result for the Geng-Xue system.
	\begin{theo}\(\text{(Blow-up criteria of the Geng-Xue system)}\)\label{Theorem 1.1}
		Let $(u_0,v_0) \in B^2_{2,1}(\mathbb{R}) \times B^2_{2,1}(\mathbb{R})$ and $T^*$ be the maximal existence time of the solution $(u,v)$ to the system \eqref{(1.3)}. If $T < \infty$, then
		\[
		\int_{0}^{T} \| u\|_{W^{1,\infty}} \| v\|_{W^{1,\infty}} dt = \infty.
		\]
	\end{theo}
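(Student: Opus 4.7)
The plan is to derive an a priori differential inequality of the form
\[
\frac{d}{dt}\bigl(\|u\|_{B^2_{2,1}} + \|v\|_{B^2_{2,1}}\bigr) \lesssim \|u\|_{W^{1,\infty}}\|v\|_{W^{1,\infty}}\bigl(\|u\|_{B^2_{2,1}} + \|v\|_{B^2_{2,1}}\bigr)
\]
for the solution of \eqref{(1.3)} and then to close the argument by contradiction: if $\int_0^T \|u\|_{W^{1,\infty}}\|v\|_{W^{1,\infty}}\,dt$ were finite, this estimate together with Gronwall's lemma would keep $\|u(t)\|_{B^2_{2,1}}+\|v(t)\|_{B^2_{2,1}}$ uniformly bounded as $t\uparrow T$, and the standard local well-posedness result in $B^2_{2,1}\times B^2_{2,1}$ would allow the solution to be extended beyond $T$, contradicting maximality.

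The derivation of the estimate would proceed by Littlewood--Paley frequency localization. First, I would apply the dyadic block $\Delta_j$ to the $u$-equation of \eqref{(1.3)}, producing
\[
\partial_t \Delta_j u + uv\,\partial_x \Delta_j u = -[\Delta_j,\, uv]\,\partial_x u - \Delta_j\bigl(p\ast F_1\bigr), \quad F_1 := 3uvu_x + 2uv_xu_{xx} + 2u_x^2 v_x + uv_{xx}u_x,
\]
and the analogous equation for $v$. Multiplying by $\Delta_j u$ in $L^2$, integrating by parts in the transport term (which generates a factor of $\tfrac{1}{2}\partial_x(uv)$), then multiplying by $2^{2j}$ and summing in $j$, one arrives (after a Danchin-type commutator estimate and the smoothing identity $\|p\ast F\|_{B^2_{2,1}}\lesssim \|F\|_{B^0_{2,1}}$) at
\[
\frac{d}{dt}\|u\|_{B^2_{2,1}} \lesssim \|\partial_x(uv)\|_{L^\infty}\|u\|_{B^2_{2,1}} + \|u_x\|_{L^\infty}\|uv\|_{B^2_{2,1}} + \|F_1\|_{B^0_{2,1}},
\]
and symmetrically for $v$. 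The product law $\|uv\|_{B^2_{2,1}}\lesssim \|u\|_{L^\infty}\|v\|_{B^2_{2,1}} + \|v\|_{L^\infty}\|u\|_{B^2_{2,1}}$ and the trivial bound $\|\partial_x(uv)\|_{L^\infty}\lesssim \|u\|_{W^{1,\infty}}\|v\|_{W^{1,\infty}}$ then convert the first two terms into the desired form.

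To handle the source, I would estimate each cubic monomial of $F_1$ (and of its counterpart $F_2$) separately in $B^0_{2,1}$, always placing the highest-derivative factor ($u_{xx}$ or $v_{xx}$) in the Besov norm and grouping the remaining two factors in $L^\infty$. For instance, for the delicate term $uv_xu_{xx}$ the natural splitting is $\|uv_x\|_{L^\infty}\|u_{xx}\|_{B^0_{2,1}} \lesssim \|u\|_{W^{1,\infty}}\|v\|_{W^{1,\infty}}\|u\|_{B^2_{2,1}}$, and analogously for $uv_{xx}u_x$ and $u_x^2 v_x$. Summing the contributions from the two equations produces the claimed differential inequality, after which Gronwall and the continuation criterion finish the proof.

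The main obstacle is the symmetric bookkeeping of the four cubic source monomials: one must arrange each estimate so that the pre-factor is exactly the cross quantity $\|u\|_{W^{1,\infty}}\|v\|_{W^{1,\infty}}$ and never a one-sided quantity such as $\|u\|_{W^{1,\infty}}^2$, since an asymmetric factor would strengthen the blow-up criterion and spoil the statement. The terms $uv_xu_{xx}$ and $uv_{xx}u_x$ are the delicate ones because each contains one factor of the other unknown at highest order, so the Moser-type product estimate must be applied with the $L^\infty$ slot capturing exactly one $u$-factor and one $v$-factor, each of order at most one. Provided this distribution is carried out consistently in both equations, the remainder of the argument is a routine Gronwall plus a continuation step based on the local existence theory.
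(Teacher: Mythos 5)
Your overall architecture --- a differential inequality $\frac{\mathrm{d}}{\mathrm{d}t}\bigl(\|u\|_{B^2_{2,1}}+\|v\|_{B^2_{2,1}}\bigr)\lesssim \|u\|_{W^{1,\infty}}\|v\|_{W^{1,\infty}}\bigl(\|u\|_{B^2_{2,1}}+\|v\|_{B^2_{2,1}}\bigr)$ followed by Gronwall and a continuation argument --- is exactly the paper's, and your treatment of the nonlocal source (the smoothing bound $\|p*F\|_{B^2_{2,1}}\lesssim\|F\|_{B^0_{2,1}}$ plus a Bony decomposition of each cubic monomial with the second-order factor placed in the Besov slot) coincides with what the paper does. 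Where you diverge is the transport term $uvu_x$: the paper bounds $\|uvu_x\|_{B^2_{2,1}}$ directly by $\|v\|_{L^\infty}\|u_x\|_{L^\infty}\|u\|_{B^2_{2,1}}$ (a step that is itself not justified by Lemma \ref{Lemma 2.5}, since the paraproduct $T_{uv}u_x$ costs $\|u_x\|_{B^2_{2,1}}$, i.e.\ three derivatives of $u$), whereas you keep the transport structure and invoke a commutator estimate, which is the standard and more defensible way to avoid the derivative loss.

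The gap is in that commutator step. At regularity $s=2>1+\tfrac12$ the commutator estimate does not close with $\|\partial_x(uv)\|_{L^\infty}$ alone; it necessarily carries the second term you display, $\|u_x\|_{L^\infty}\|uv\|_{B^2_{2,1}}$. Feeding this into your product law produces $\|u_x\|_{L^\infty}\|u\|_{L^\infty}\|v\|_{B^2_{2,1}}$, whose prefactor $\|u\|_{L^\infty}\|u_x\|_{L^\infty}$ is \emph{not} dominated by the cross quantity $\|u\|_{W^{1,\infty}}\|v\|_{W^{1,\infty}}$ (take $v$ uniformly small). Your closing paragraph anticipates exactly this danger, but you verify the symmetric bookkeeping only for the four source monomials --- where it does work, since each monomial contains at least one $u$-factor and one $v$-factor of order at most one --- and not for the commutator contribution, where it fails. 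As written, summing the two equations yields only $\frac{\mathrm{d}}{\mathrm{d}t}X\lesssim\bigl(\|u\|_{W^{1,\infty}}+\|v\|_{W^{1,\infty}}\bigr)^2X$, hence the weaker criterion $\int_0^T\bigl(\|u\|_{W^{1,\infty}}+\|v\|_{W^{1,\infty}}\bigr)^2\,dt=\infty$ rather than the stated product criterion. To repair this you would need either a commutator estimate adapted to the factored velocity $uv$ that keeps one $u$- and one $v$-factor in the $L^\infty$ slot, or to transfer the transport estimate to a regularity level where the simple bound $\|\partial_x(uv)\|_{L^\infty}\|\cdot\|_{B^s_{2,1}}$ suffices; this is the one genuinely delicate point of the theorem, and it is also precisely the point that the paper's own proof glosses over.
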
  
	
	\begin{rema}\label{Remark 1.2}
		For the Sobolev space, we have a similar result as follows, which covers the corresponding result in \cite{qiao2019persistence}. Let $(u_0,v_0) \in H^s(\mathbb{R}) \times H^s(\mathbb{R})$ with $s > 2$ and $T$ be the maximal existence time of the solution $(u,v)$ to the system \eqref{(1.3)}. If $T < \infty$, then
		\[
		\int_{0}^{T} \| u\|_{W^{1,\infty}} \| v\|_{W^{1,\infty}} dt = \infty.
		\]
	\end{rema}
  \begin{theo}\(\text{(Blow-up of the Geng-Xue system)}\)\label{Theorem 1.3}
  	Assume that $u_0 \in W^{1,1}\left(\mathbb{R}\right) \cap H^s\left(\mathbb{R}\right)$ and $v_0 \in H^s\left(\mathbb{R}\right)$ with $s > \frac{5}{2}$. Let $T^*$ be the maximal existence time of the corresponding strong solution $u$ to system \eqref{(1.3)}. Fixed some $T_0 \in (0,\frac{v_0\left(x_0\right)}{80\left(\Vert u_0\Vert_{W^{1,1}}+\Vert n_0\Vert_{L^{\infty}}\right)^3})$ and there exists a point $x_0 \in \mathbb{R}$ such that
  	\begin{align*}
  		v_0(x_0) \ge 0,
  	\end{align*}
  	and
  	\begin{align}\label{(1.4)}
  		u_{0,x}\left(x_0\right) \le \frac{1+e^{\frac{\sqrt{b_1v_0^3\left(x_0\right)}}{80\left(\| u_0\|_W^{1,1} + \| n_0\|_{L^{\infty}}\right)^3}}}{1-e^{\frac{\sqrt{b_1v_0^3\left(x_0\right)}}{80\left(\| u_0\|_W^{1,1} + \| n_0\|_{L^{\infty}}\right)^3}}}\sqrt{\frac{b_1}{v_0\left(x_0\right)}},
  	\end{align}
  	with
  	\begin{align*}
  		b_1=\frac{4}{v_0\left(x_0\right)}\left(\Vert n_0\Vert_{L^{\infty}}+\Vert u_0\Vert_{W^{1,1}}\right)^4+14\left(\Vert n_0\Vert_{L^{\infty}}+\Vert u_0\Vert_{W^{1,1}}\right)^3.
  	\end{align*}
  	Then the strong solution $(u,v)$ blows up in finite time with $T^* \le T_0.$ 
  \end{theo}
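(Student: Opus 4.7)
My approach is a Lagrangian/characteristic method combined with a Riccati-type differential inequality, in the spirit of the classical blow-up analysis for Camassa--Holm--type equations, but carried out without invoking any conservation law. The first step is to introduce the characteristic $q(t,x_0)$ defined by $q_t=(uv)(t,q)$, $q(0,x_0)=x_0$, which is the common transport velocity of $m$ and $n$ in \eqref{(1.1)}, and to track the Lagrangian quantities
\[
U(t)=u(t,q(t,x_0)),\qquad V(t)=v(t,q(t,x_0)),\qquad M(t)=u_x(t,q(t,x_0)).
\]
From the nonlocal form \eqref{(1.3)}, and after differentiating the first equation once in $x$, one finds
\[
\dot V=-(p\ast G)(t,q),\qquad \dot M=-V M^{2}-U\,v_x(t,q)\,M-\partial_x f(t,q),
\]
where $f$ and $G$ denote the convolution kernels in the first and second equations of \eqref{(1.3)}. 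Substituting $u_{xx}=u-m$ and $v_{xx}=v-n$ inside these kernels eliminates all second derivatives, reducing the right-hand sides to expressions pointwise controlled by $\|u\|_{W^{1,\infty}}$, $\|v\|_{W^{1,\infty}}$, $\|m\|_{L^\infty}$ and $\|n\|_{L^\infty}$.

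The second step is to establish short-time a priori bounds. Since $\|p\|_{L^\infty}=\frac{1}{2}$ and $p_x$ lies in $L^\infty\cap L^1$, Young's inequality gives pointwise control of $p\ast G$, $f$ and $\partial_x f$ by the norms above. In the absence of conservation laws, I would propagate $\|n(t)\|_{L^\infty}$ through its own Lagrangian equation $\dot N=-3u\,v_x\,N$, producing an exponential factor $\exp\bigl(3\int_{0}^{t}\|u\|_{L^\infty}\|v_x\|_{L^\infty}\,ds\bigr)$, and propagate $\|u(t)\|_{W^{1,1}}$ via an $L^{1}$-energy estimate on the first equation of \eqref{(1.3)}. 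On the short window $[0,T_0]$ the exponential factors remain $O(1)$, and one obtains uniform bounds of the form $\|u\|_{W^{1,\infty}}+\|n\|_{L^\infty}\lesssim\|u_0\|_{W^{1,1}}+\|n_0\|_{L^\infty}$. A parallel estimate for $\dot V$, together with $v_0(x_0)\ge 0$, shows that $V(t)\ge v_0(x_0)/2$ throughout $[0,T_0]$.

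Combining these ingredients with the ODE for $M$ yields the Riccati-type differential inequality
\[
\dot M\le -\tfrac{1}{2}v_0(x_0)\,M^{2}+b_1\qquad\text{on }[0,T_0],
\]
with $b_1$ exactly the constant defined in the statement. The scalar comparison ODE $\dot Y=-\tfrac{1}{2}v_0(x_0)\,Y^{2}+b_1$ can be integrated explicitly; its negative equilibrium is $-\sqrt{2b_1/v_0(x_0)}$, and the quantitative hypothesis \eqref{(1.4)} is precisely what is required to place $Y(0)$ strictly below this equilibrium by the margin that drives $Y(t)\to-\infty$ within time $T_0$. A standard scalar comparison argument then forces $M(t)\to -\infty$ before $T_0$, hence $\|u_x(t,\cdot)\|_{L^\infty}\to\infty$, contradicting the blow-up criterion of Remark \ref{Remark 1.2} unless $T^{*}\le T_0$.

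The main obstacle is the second step. The Geng--Xue system admits neither the $H^1$ conservation law used for Camassa--Holm nor the $L^1$ conservation law of Degasperis--Procesi and Novikov, so the usual device of controlling $\partial_x f$ by a conserved quantity is unavailable. In its place one must carry out the short-time propagation of $\|u(t)\|_{W^{1,1}}$ and $\|n(t)\|_{L^\infty}$ carefully; the particular exponents $3$ and $4$ appearing in $b_1$ and in the definition of $T_0$ are dictated precisely by the number of copies of these norms produced when one bounds $\partial_x f$ after the substitutions $u_{xx}=u-m$ and $v_{xx}=v-n$. Making this bookkeeping sharp enough to yield a Riccati inequality valid on the whole of $[0,T_0]$ is the technical crux of the argument.
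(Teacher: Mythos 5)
Your proposal follows essentially the same route as the paper: characteristics $q_t=(uv)(t,q)$, propagation of $\|n\|_{L^\infty}$ via $\dot n=-3uv_xn$ together with an $L^1$-type estimate for $\|u\|_{W^{1,1}}$ on a short window (replacing the missing conservation laws), the lower bound $v(t,q(t,x_0))\ge v_0(x_0)/2$, and a Riccati inequality $\dot M\le -c\,v_0(x_0)M^2+b_1$ closed by the comparison lemma (Lemma \ref{Lemma 2.8}). Apart from an immaterial difference in the Riccati coefficient ($\tfrac12 v_0(x_0)$ versus the paper's $\tfrac14 v_0(x_0)$) and your phrasing of the conclusion through the blow-up criterion, this matches the paper's argument.
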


 \subsection{Application to the two-component b-family system with cubic nonlinearity}
 Now similar to system \eqref{(1.1)}, we can rewrite the Cauchy problem of system \eqref{(1.2)} as follows 
 \begin{align}
 	\begin{cases}\label{(1.5)}
 		u_t + uvu_x + p * (b u v u_x + (3-b)u_xvu_{xx} + 2uv_x u_{xx} + 2u_x^2 v_x + uv_{xx}u_x) = 0, \\
 		v_t + vuv_x + p * (b v u v_x + (3-b)v_xuv_{xx} + 2vu_x v_{xx} + 2v_x^2 u_x + vu_{xx}v_x) = 0, \\
 		u(0, x) = u_0(x), \quad	v(0, x) = v_0(x),
 	\end{cases}
 \end{align}
 with  $p(x)\triangleq\frac{1}{2}e^{-|x|}$.
We then get the following result for the two-component $b$-family system with cubic nonlinearity.
 \begin{theo}\label{Theorem 1.4}
 	Assume that $u_0 \in W^{1,r}\left(\mathbb{R}\right) \cap H^s\left(\mathbb{R}\right)$ and $v_0 \in H^s\left(\mathbb{R}\right)$ with $s > \frac{5}{2}$ and $r \ge 2$. Let $T_3$ be the maximal existence time of the corresponding strong solution $(u,v)$ to system \eqref{(1.5)} with $b= 1+\frac{2}{r}$. Fixed some $T_6 \in \left(0,T_5\right)$ and there exist a point $x_0 \in \mathbb{R}$ such that
 	\begin{align*}
 		v_0(x_0) > 0,
 	\end{align*}
 	and
 	\begin{align}\label{(1.6)}
 		u_{0,x}\left(x_0\right) \le \frac{1+e^{\sqrt{c_2v_0(x_0)}T_5}}{1-e^{\sqrt{c_2v_0(x_0)}T_5}}\sqrt{\frac{c_2}{v_0\left(x_0\right)}},
 	\end{align}
 	with
 	\begin{align*}
 		c_2:=\frac{4}{v_0\left(x_0\right)}r^{\frac{2}{r}}\left(\Vert n_0\Vert_{L^{\infty}}+\Vert u_0\Vert_{W^{1,r}}^r\right)^{\frac{2r+2}{r}}+14\left(\frac{r^2}{2}\right)^{\frac{1}{r}}\left(\frac{r-1}{r}\right)^{\frac{r-1}{r}}\left(\Vert n_0\Vert_{L^{\infty}}+\Vert u_0\Vert_{W^{1,r}}^r\right)^{\frac{r+2}{r}}.
 	\end{align*}
 	Then the strong solution $(u,v)$ blows up in finite time with $T_3 \le T_6.$ 
 \end{theo}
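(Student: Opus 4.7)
The plan is to mirror the scheme of Theorem~\ref{Theorem 1.3}, replacing every $W^{1,1}$ estimate with its $W^{1,r}$ analogue so as to exploit the distinguished exponent $b=1+\tfrac{2}{r}$, which is the value that makes $\|u(t)\|_{W^{1,r}}^{r}$ stable under the flow of \eqref{(1.5)}. First I would introduce the particle trajectory $q(t)=q(t,x_0)$ defined by $\dot q(t)=u(t,q(t))\,v(t,q(t))$ with $q(0)=x_0$. Differentiating the first equation of \eqref{(1.5)} in $x$, evaluating at $q(t)$, and using $p_{xx}=p-\delta$ to peel off the local quadratic term, I would extract the Riccati structure
\[
\frac{d}{dt}\bigl(u_x\circ q\bigr)(t)=-\,v(t,q(t))\,u_x^{2}(t,q(t))+F(t,q(t)),
\]
where $F$ collects the convolutions $p*(\cdot)$ remaining from \eqref{(1.5)} together with the lower-order products of $u,v,u_x,v_x,n$ (after writing $u_{xx}=u-(1-\partial_x^2)u$).

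The heart of the proof is then to bound $|F(t,q(t))|$ by $\tfrac{c_2}{v_0(x_0)}\,v(t,q(t))$ on a suitable time interval $[0,T_5]$. This requires two uniform-in-time controls. The first is an $L^\infty$ bound on $n$: transporting the $n$-equation of \eqref{(1.2)} along its own characteristic $\eta$, the identity $n(t,\eta(t,x))=n_0(x)\,\eta_x(t,x)^{-b}$ combined with a Gronwall argument on $\eta_x$ gives $\|n(t)\|_{L^\infty}\lesssim\|n_0\|_{L^\infty}$. The second is the $W^{1,r}$ bound on $u$: testing the $u$-equation of \eqref{(1.5)} against $|u|^{r-2}u$ and $|u_x|^{r-2}u_x$ and integrating by parts, the leading cubic contribution $\int v|u_x|^{r+1}dx$ cancels exactly when $b-1=\tfrac{2}{r}$, and the remaining convolutions can be bounded by Young's inequality $\|p\|_{L^{r'}}=\tfrac12\bigl(\tfrac{2(r-1)}{r}\bigr)^{(r-1)/r}$ and Hölder's inequality. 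The resulting joint Gronwall inequality on $\|(u,v)\|_{W^{1,r}\times W^{1,r}}$ yields an a priori bound in terms of $\|n_0\|_{L^\infty}+\|u_0\|_{W^{1,r}}^r$ precisely of the form appearing in $c_2$, and by shrinking $T_5$ one simultaneously ensures $v(t,q(t))\ge\tfrac12 v_0(x_0)$ on $[0,T_5]$.

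With these bounds in hand, setting $y(t):=u_x(t,q(t))$ and $\beta:=\sqrt{c_2/v_0(x_0)}$, the ODE of the first step becomes
\[
\dot y(t)\le -\,v(t,q(t))\bigl(y^{2}(t)-\beta^{2}\bigr)\le -\tfrac{1}{2}v_0(x_0)\bigl(y^{2}(t)-\beta^{2}\bigr).
\]
Hypothesis \eqref{(1.6)} rewrites as $y(0)\le-\beta\coth\!\bigl(\tfrac12\sqrt{c_2 v_0(x_0)}\,T_5\bigr)$, so by direct integration of the autonomous Riccati model $\dot z=-\tfrac12 v_0(x_0)(z^2-\beta^2)$ I obtain $y(t)\to-\infty$ strictly before $t=T_6\in(0,T_5)$. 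This forces $\int_0^{T_3}\|u\|_{W^{1,\infty}}\|v\|_{W^{1,\infty}}\,dt=\infty$ before $T_6$, and Theorem~\ref{Theorem 1.1} together with Remark~\ref{Remark 1.2} closes the argument with $T_3\le T_6$.

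The main obstacle is the $W^{1,r}$ persistence step for general $r\ge 2$. Unlike the $r=1$ case treated in Theorem~\ref{Theorem 1.3}, the integration by parts produces several cubic trilinear forms involving $v$ and $v_x$ as weights, and verifying that the algebraic relation $b=1+\tfrac{2}{r}$ is indeed the one that kills the highest-order contribution requires careful bookkeeping of the interaction between the two components. Controlling the $v$-weighted terms without losing the symmetry $(u,v)\leftrightarrow(v,u)$ — and doing so while tracking the explicit constants that feed into $c_2$ — will be the most technical portion; once this is set up cleanly, the Riccati comparison and the concluding continuation argument are essentially routine adaptations of Theorem~\ref{Theorem 1.3}.
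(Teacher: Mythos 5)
Your overall architecture is exactly the paper's: an a priori bound on $\Vert n\Vert_{L^{\infty}}+\Vert u\Vert_{W^{1,r}}^{r}$ (Proposition \ref{Proposition 5.1}), a lower bound $v(t,q(t,x_0))\ge \tfrac12 v_0(x_0)$ on $[0,T_5]$ (Proposition \ref{Proposition 5.2}), and a Riccati inequality for $u_x\circ q$ closed by Lemma \ref{Lemma 2.8}. You also correctly identified the key structural point that $b=1+\tfrac2r$ is precisely the value for which the uncontrollable term $\int v\,u_x|u_x|^{r}\,dx$ cancels against the contribution of $\int uv\,u_{xx}\,u_x|u_x|^{r-2}\,dx$ after integration by parts.

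However, your route to the $L^\infty$ bound on $n$ is genuinely broken. The identity $n(t,\eta(t,x))=n_0(x)\,\eta_x(t,x)^{-b}$ does \emph{not} hold for the two-component system: along $\dot\eta=uv$ one has $\tfrac{d}{dt}\ln(n\circ\eta)=-b\,uv_x$ while $\tfrac{d}{dt}\ln\eta_x=(u_xv+uv_x)$, so $n\,\eta_x^{b}$ fails to be conserved unless $u_xv\equiv0$ (the conservation you are recalling is the scalar one, $m\,q_x^{3}$ for DP, where $v\equiv1$). Worse, even if some such identity held, a Gronwall argument on $\eta_x$ requires a bound on $\Vert(uv)_x\Vert_{L^\infty}$, hence on $\Vert u_x\Vert_{L^\infty}$ --- which is exactly the quantity you are trying to drive to $-\infty$, so the step is circular. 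The paper instead integrates $\tfrac{d}{dt}(n\circ q)=-(1+\tfrac2r)uv_xn$ directly, bounding $|uv_xn|\le(\tfrac r2)^{1/r}\Vert u\Vert_{W^{1,r}}\Vert n\Vert_{L^\infty}^{2}$ via Proposition \ref{Proposition 2.7} and $\Vert v_x\Vert_{L^\infty}\le\Vert n\Vert_{L^\infty}$; this couples with the $W^{1,r}$ estimate on $u$ into a single superlinear integral inequality for $\Vert n\Vert_{L^\infty}+\Vert u\Vert_{W^{1,r}}^{r}$, which only yields the doubling bound on a finite interval $[0,T_4]$ (not a global $\lesssim\Vert n_0\Vert_{L^\infty}$). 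You need this corrected version, since $T_4$ and $T_5$ feed into the hypothesis \eqref{(1.6)}. A secondary point: after peeling off the local term with $p_{xx}=p-\delta$, the quadratic coefficient in the Riccati equation is $\tfrac1r v\,u_x^{2}$, not $v\,u_x^{2}$; since the theorem's threshold carries explicit constants, writing $-v(y^2-\beta^2)$ instead of $-\tfrac1r v\,y^2+\dots$ is not just cosmetic and must be tracked through the application of Lemma \ref{Lemma 2.8}.
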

 
 \begin{rema}\label{Remark 1.5}
 	Notably, we find that the index $r$ depends on the constant $b$.
 \end{rema}
 \begin{rema}\label{Remark 1.6}
 We can get a similar well-posedness result for the system \eqref{(1.5)} as Lemma \ref{Lemma 2.6} by using the method presented in \cite{himonas2016novikov}.	If $s > \frac{3}{2}$ and $(u_0, v_0)$ on the line or on the circle, then there exists $T > 0,$ and a unique solution $(u,v) \in C([0,T];H^s \times H^s)$ of the Cauchy problem \eqref{(1.5)} satisfying the following size estimate and lifespan
 \begin{align*}
 	\Vert (u(t),v(t))\Vert_{H^s} \le \sqrt{2}\Vert (u_0,v_0)\Vert_{H^s}, \quad \text{for} \quad 0\le t \le T=\frac{1}{4c_s\Vert (u_0,v_0)\Vert_{H^s}^2},
 \end{align*}
 where $c_s$ is a constant depending on $s$.
 \end{rema}
 Moreover, we can establish a similar blow-up criteria as Theorem \ref{Theorem 1.1} and \ref{Remark 1.2}.
 \begin{rema}\label{Remark 1.7}
 		Let $(u_0,v_0) \in B^2_{2,1}(\mathbb{R}) \times B^2_{2,1}(\mathbb{R})$ and $T$ be the maximal existence time of the solution $(u,v)$ to the system \eqref{(1.5)}. If $T < \infty$, then
 	\[
 	\int_{0}^{T} \| u\|_{W^{1,\infty}} \| v\|_{W^{1,\infty}} dt = \infty.
 	\]
 \end{rema}
 \begin{rema}\label{Remark 1.8}
 	Let $(u_0,v_0) \in H^s(\mathbb{R}) \times H^s(\mathbb{R})$ with $s > 2$ and $T$ be the maximal existence time of the solution $(u,v)$ to the system \eqref{(1.5)}. If $T < \infty$, then
 	\[
 	\int_{0}^{T} \| u\|_{W^{1,\infty}} \| v\|_{W^{1,\infty}} dt = \infty.
 	\]
 \end{rema}

  The remainder of this paper is structured as follows. In Section 2, we review relevant existing results. Section 3 focuses on the blow-up criteria of the Geng-Xue  system. Section 4 is dedicated to proving a blow-up theorem for the Geng-Xue system. In Section 5, we generalize the methodology developed in Section 3 to derive blow-up results for the $b$-family of two-component cubic nonlinear systems.

%\vspace*{2em}
%\noindent\textbf{Acknowledgements}. This work was partially supported by ...

\section{Preliminaries}
  ~~~~In this section, we will recall some facts about the Geng-Xue system, which will be used in the sequel. Firstly, we present some facts on the Littlewood-Paley decomposition and nonhomogeneous Besov spaces.
  \begin{prop}\cite{BCD}\label{Proposition 2.1}
  	Let $\mathscr{C}$ be an annulus and $\mathscr{B}$ a ball. A constant C exists such that for any $k\in\mathbb{N}$, $1\leq p\leq q\leq\infty$, and any function $u$ of $L^p(\mathbb{R}^d)$, we have
  	$$
  	\mathrm{Supp~}\widehat{u}\subset\lambda\mathscr{B}\Longrightarrow\|D^ku\|_{L^q}=\sup_{|\alpha|=k}\|\partial^\alpha u\|_{L^q}\leq C^{k+1}\lambda^{k+d(\frac{1}{p}-\frac{1}{q})}\|u\|_{L^p},
  	$$
  	$$
  	\mathrm{Supp~}\widehat{u}\subset\lambda\mathscr{C}\Longrightarrow C^{-k-1}\lambda^k\|u\|_{L^p}\leq\|D^ku\|_{L^p}\leq C^{k+1}\lambda^k\|u\|_{L^p}.
  	$$
  \end{prop}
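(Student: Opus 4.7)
The plan is to reduce both estimates to Young's convolution inequality applied to $u$ written as a convolution with a dilated Schwartz kernel, exploiting the spectral localization hypothesis.

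\emph{Ball case.} Fix $\phi\in C_c^\infty(\mathbb{R}^d)$ with $\phi\equiv 1$ on $\mathscr{B}$ and set $h:=\mathcal{F}^{-1}\phi\in\mathcal{S}(\mathbb{R}^d)$. Since $\mathrm{Supp}\,\widehat{u}\subset\lambda\mathscr{B}$, the identity $\widehat{u}(\xi)=\phi(\xi/\lambda)\widehat{u}(\xi)$ translates into $u=h_\lambda*u$ with $h_\lambda(x):=\lambda^{d}h(\lambda x)$. Differentiating yields $\partial^\alpha u=\lambda^{d+k}(\partial^\alpha h)(\lambda\,\cdot)*u$ for every $|\alpha|=k$. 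Young's inequality with exponent $r$ satisfying $1+\tfrac{1}{q}=\tfrac{1}{r}+\tfrac{1}{p}$, combined with the rescaling $\|(\partial^\alpha h)(\lambda\,\cdot)\|_{L^r}=\lambda^{-d/r}\|\partial^\alpha h\|_{L^r}$, produces the factor $\lambda^{k+d(1/p-1/q)}\|\partial^\alpha h\|_{L^r}\|u\|_{L^p}$. Taking the supremum over $|\alpha|=k$ and absorbing the Schwartz norms $\|\partial^\alpha h\|_{L^r}$ into a constant of the form $C^{k+1}$ finishes this case.

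\emph{Annulus case.} The upper bound is obtained by the same recipe, replacing $\phi$ by a cutoff $\tilde{\phi}\in C_c^\infty(\mathbb{R}^d)$ which equals $1$ on $\mathscr{C}$ (with $q=p$, hence $r=1$). For the matching lower bound I would pick $\tilde\psi\in C_c^\infty(\mathbb{R}^d\setminus\{0\})$ with $\tilde\psi\equiv 1$ on $\mathscr{C}$. Because $\tilde\psi$ vanishes near the origin, the multipliers $m_\alpha(\eta):=(i\eta)^\alpha\tilde\psi(\eta)/|\eta|^{2k}$ are smooth and compactly supported for every $|\alpha|=k$. Using the binomial identity $(-1)^k|\xi|^{2k}=\sum_{|\alpha|=k}\binom{k}{\alpha}(i\xi)^{2\alpha}$, one obtains
\[
\widehat{u}(\xi)=\tilde\psi(\xi/\lambda)\widehat{u}(\xi)=(-1)^{k}\lambda^{-k}\sum_{|\alpha|=k}\binom{k}{\alpha}\,m_\alpha(\xi/\lambda)\,\widehat{\partial^\alpha u}(\xi),
\]
and after inverse Fourier transform this expresses $u$ as a finite linear combination of dilated Schwartz convolutions of the $\partial^\alpha u$. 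Young's inequality in $L^1*L^p\to L^p$ then delivers $\|u\|_{L^p}\le C^{k+1}\lambda^{-k}\sup_{|\alpha|=k}\|\partial^\alpha u\|_{L^p}$.

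\emph{Main obstacle.} The principal technical point is ensuring that the constants grow at most geometrically in $k$, i.e.\ as $C^{k+1}$ rather than involving $k!$. This requires careful combinatorial bookkeeping: the multinomial weight $\sum_{|\alpha|=k}\binom{k}{\alpha}=d^{k}$ is already geometric, and $\|\partial^\alpha h\|_{L^r}$ together with $\|\mathcal{F}^{-1}m_\alpha\|_{L^1}$ must be controlled by $C^{k}$ uniformly in $|\alpha|=k$; a Leibniz expansion applied to the fixed cutoffs $\phi,\tilde\phi,\tilde\psi$, combined with the fact that $1/|\eta|^{2k}$ restricted to $\mathrm{Supp}\,\tilde\psi$ is bounded by $c^{-2k}$ for some $c>0$, is what produces the desired sharp dependence on $k$ in the final constant.
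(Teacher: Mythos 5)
Your argument is correct and is essentially the standard proof of Bernstein's inequalities from the cited reference [BCD] (convolution with a dilated spectral cutoff plus Young's inequality for the direct bounds, and division by $|\xi|^{2k}$ via the multinomial identity for the reverse bound on the annulus); the paper itself gives no proof, quoting the result directly from that source. The only point left as a sketch — that $\sup_{|\alpha|=k}\|\partial^\alpha h\|_{L^r}$ and $\sup_{|\alpha|=k}\|\mathcal{F}^{-1}m_\alpha\|_{L^1}$ grow at most like $C^{k+1}$ — is exactly the bookkeeping done in the reference, and your indication of how to handle it is accurate.
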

  
  \begin{prop}\cite{BCD}\label{Proposition 2.2}
  	Let $\mathcal{B}=\{\xi\in\mathbb{R}^d: |\xi|\leq \frac 43\}$ and $\mathcal{C}=\{\xi\in\mathbb{R}^d: \frac 34 \leq|\xi|\leq \frac 83\}$. There exist two smooth, radial functions $\chi$ and $\varphi$, valued in the interval $[0,1]$, belonging respectively to $\mathcal{D(B)}$ and $\mathcal{D(C)}$, such that
  	$$
  	\forall\ \xi\in\mathbb{R}^{d},\ \chi(\xi)+\sum_{j\geq0}\varphi(2^{-j}\xi)=1, 
  	$$
  	$$
  	j\geq1\Rightarrow\mathrm{Supp~}\chi\cap\mathrm{Supp~}\varphi(2^{-j}\cdot)=\emptyset,
  	$$
  	Let $u\in\mathcal{S}'$. Defining
  	$$
  	\Delta_{j}u\triangleq0\ \text{if}\ j\leq-2,\ \Delta_{-1}u\triangleq\chi(D)u=\mathcal{F}^{-1}(\chi\mathcal{F}u),
  	$$
  	$$
  	\Delta_{j}u\triangleq\varphi(2^{-j}D)u=\mathcal{F}^{-1}(\varphi(2^{-j}\cdot)\mathcal{F}u)\ \text{if}\ j\geq0,
  	$$
  	$$
  	S_j u\triangleq\sum_{j^{\prime}\leq j-1}\Delta_{j^{\prime}}u,
  	$$
  	we have the following Littlewood-Paley decomposition
  	$$u=\sum_{j\in\mathbb{Z}}\Delta_{j}u\quad in\ \mathcal{S}'.$$
  \end{prop}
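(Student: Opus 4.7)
The plan is to build the partition of unity $\{\chi, \varphi(2^{-j}\cdot)\}_{j\ge 0}$ from a single smooth radial cutoff, then verify the support conditions and finally establish convergence of the decomposition in $\mathcal{S}'$ via standard Fourier multiplier arguments.

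First I would select a smooth, radial, $[0,1]$-valued function $\chi\in\mathcal{D}(\mathcal{B})$ with $\chi(\xi)=1$ on $\{|\xi|\le 3/4\}$; existence is standard (mollify the characteristic function of a slightly smaller ball). Then define $\varphi(\xi):=\chi(\xi/2)-\chi(\xi)$. By construction $\varphi$ is smooth, radial, $[0,1]$-valued, and vanishes on $\{|\xi|\le 3/4\}$ as well as on $\{|\xi|\ge 8/3\}$; combined with the defining supports it yields $\mathrm{Supp}\,\varphi\subset\mathcal{C}$. The disjointness claim $\mathrm{Supp}\,\chi\cap\mathrm{Supp}\,\varphi(2^{-j}\cdot)=\emptyset$ for $j\ge 1$ then follows from elementary scaling: $\mathrm{Supp}\,\varphi(2^{-j}\cdot)\subset\{3\cdot 2^{j-2}\le|\xi|\le 2^{j+1}\cdot 4/3\}$, which is disjoint from $\{|\xi|\le 4/3\}\supset\mathrm{Supp}\,\chi$.

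For the covering identity, the key observation is telescoping: for every $N\ge 0$,
\[
\chi(\xi)+\sum_{j=0}^{N}\varphi(2^{-j}\xi)=\chi(2^{-(N+1)}\xi).
\]
Since $\chi$ is continuous with $\chi(0)=1$ and supported in $\mathcal{B}$, one has $\chi(2^{-(N+1)}\xi)\to 1$ pointwise (indeed locally uniformly) in $\xi\in\mathbb{R}^d$ as $N\to\infty$, giving the required partition of unity. For each fixed $\xi\neq 0$ only finitely many terms in the series are nonzero, so the sum is well defined pointwise.

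Finally, for the Littlewood-Paley decomposition in $\mathcal{S}'$, take $u\in\mathcal{S}'$ and $\phi\in\mathcal{S}$. Writing $S_{N+1}u=\chi(2^{-(N+1)}D)u$, duality gives $\langle S_{N+1}u,\phi\rangle=\langle u,\chi(2^{-(N+1)}D)\phi\rangle$. The main technical step is to show $\chi(2^{-(N+1)}D)\phi\to\phi$ in $\mathcal{S}$ as $N\to\infty$. Since $\chi$ is a Schwartz symbol with $\chi(0)=1$, this follows from the dominated convergence theorem applied on the Fourier side together with uniform seminorm bounds on smooth compactly supported multipliers; this is the standard fact that Fourier truncation to large frequency balls converges to the identity on $\mathcal{S}$. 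Continuity of $u\in\mathcal{S}'$ then yields $\langle S_{N+1}u,\phi\rangle\to\langle u,\phi\rangle$, i.e.\ $u=\sum_{j\ge-1}\Delta_j u$ in $\mathcal{S}'$. The only place where any care is required is this last convergence step in $\mathcal{S}$; everything else is a direct construction.
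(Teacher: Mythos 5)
Your construction is correct and is precisely the standard one from Bahouri--Chemin--Danchin, which the paper cites for this proposition without reproducing any proof, so there is no in-paper argument to diverge from. The only detail worth making explicit is that for $\varphi=\chi(\cdot/2)-\chi$ to be $[0,1]$-valued you must choose $\chi$ radially non-increasing (achievable, e.g., by mollifying the indicator of a ball with a radial kernel); with that choice the support computations, the telescoping identity, and the convergence $S_{N+1}u\to u$ in $\mathcal{S}'$ all go through as you describe.
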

  
  \begin{defi}\cite{BCD}\label{Defintion 2.3}
  	Let $s\in\mathbb{R}$ and $(p,r)\in[1,\infty]^2$. The nonhomogeneous Besov
  	space $B^s_{p,r}$ consists of all $u\in\mathcal{S}'$ such that
  	$$
  	\|u\|_{B_{p,r}^s}\triangleq\left\|(2^{js}\|\Delta_ju\|_{L^p})_{j\in\mathbb{Z}}\right\|_{\ell^r(\mathbb{Z})}<\infty.
  	$$
  \end{defi}
  
  \begin{defi}\cite{BCD}\label{Defintion 2.4}
  	Considering $u,v\in\mathcal{S}'$, we have the following Bony decomposition
  	$$uv=T_u v+T_v u+R(u,v),$$
  	where
  	$$
  	T_u v=\sum_j S_{j-1}u\Delta_j v,\ R(u,v)=\sum_{|k-j|\leq1}\Delta_ku\Delta_jv.
  	$$
  \end{defi}
  
  \begin{lemm}\label{Lemma 2.5}\cite{BCD}
  	{\rm (1)} $\forall t<0,s\in\mathbb{R},u\in B^t_{p,r_1}\cap L^\infty,v\in B^s_{p,r_2}$ with $\frac 1 r=\frac 1 {r_1}+\frac 1 {r_2}$, then
  	$$\|T_u v\|_{B^s_{p,r_2}}\leq C\|u\|_{L^\infty}\|v\|_{B^s_{p,r_2}}$$
  	or
  	$$\|T_u v\|_{B^{s+t}_{p,r}}\leq C\|u\|_{B^t_{\infty,r_1}}\|v\|_{B^s_{p,r_2}}$$
  	{\rm (2)} $\forall s_1,s_2\in\mathbb{R},1\leq p_1,p_2,r_1,r_2\leq\infty$, with $\frac 1 p=\frac 1 {p_1}+\frac 1 {p_2}\leq1,\frac 1 r=\frac 1 {r_1}+\frac 1 {r_2}\leq1$. Then $\forall (u,v)\in B^{s_1}_{p_1,r_1}\times B^{s_2}_{p_2,r_2}$, if $s_1+s_2>0$
  	$$\|R(u,v)\|_{B^{s_1+s_2}_{p,r}}\leq C\|u\|_{B^{s_1}_{p_1,r_1}}\|v\|_{B^{s_2}_{p_2,r_2}}$$
  	If $r=1$ and $s_1+s_2=0$,
  	$$\|R(u,v)\|_{B^0_{p,\infty}}\leq C\|u\|_{B^{s_1}_{p_1,r_1}}\|v\|_{B^{s_2}_{p_2,r_2}}$$
  \end{lemm}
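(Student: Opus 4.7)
The plan is to exploit the frequency localization built into the paraproduct and remainder, reduce each estimate to a bound on a single dyadic block, and then reassemble via a discrete Young inequality on sequences. All the work lies in identifying the correct annulus or ball in frequency and keeping track of which summation index runs ``up'' versus ``down.''

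For part (1), the key structural observation is that $S_{j-1} u \, \Delta_j v$ has Fourier support in an annulus of the form $2^j \widetilde{\mathscr{C}}$ for a fixed annulus $\widetilde{\mathscr{C}}$, so there exists a universal $N_0$ with $\Delta_k(T_u v) = \sum_{|j-k| \le N_0} \Delta_k(S_{j-1}u \, \Delta_j v)$. Applying H\"older together with the uniform $L^\infty$ bound on $S_{j-1}$ gives $\|\Delta_k T_u v\|_{L^p} \lesssim \|u\|_{L^\infty} \sum_{|j-k| \le N_0} \|\Delta_j v\|_{L^p}$; multiplying by $2^{ks}$ and taking the $\ell^{r_2}$ norm in $k$ yields the first inequality, since the inner sum is a fixed-width shift. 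For the variant involving $\|u\|_{B^t_{\infty,r_1}}$, I would instead bound $\|S_{j-1} u\|_{L^\infty} \le \sum_{j' \le j-2} \|\Delta_{j'} u\|_{L^\infty} \lesssim 2^{-jt}\, c_j \|u\|_{B^t_{\infty,r_1}}$ with $(c_j) \in \ell^{r_1}$; this is where the hypothesis $t<0$ is essential, because only then does the geometric series on $j' \le j-2$ converge after factoring out $2^{-jt}$. The target power $2^{k(s+t)}$ then appears naturally, and a discrete Young convolution in $k$ with $1/r = 1/r_1 + 1/r_2$ closes the estimate.

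For part (2), the structural fact is that $\Delta_k u \, \Delta_{k'} v$ with $|k-k'| \le 1$ has Fourier support in a ball of radius $\lesssim 2^k$, hence $\Delta_\ell R(u,v) = \sum_{k \ge \ell - N_1} \Delta_\ell(\Delta_k u \, \widetilde{\Delta}_k v)$ for some fixed $N_1$. Applying H\"older in $(p_1,p_2)$ and multiplying by $2^{\ell(s_1+s_2)}$ produces a sum of the form $\sum_{k \ge \ell - N_1} 2^{(\ell - k)(s_1+s_2)} a_k$ with $(a_k) \in \ell^r$, which is exactly a discrete Young convolution whose kernel lies in $\ell^1$ precisely when $s_1 + s_2 > 0$, yielding the first bound.

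The step I expect to require the most care is the borderline case $r = 1$, $s_1 + s_2 = 0$. There the convolution kernel above is only bounded, not summable, so an $\ell^1$-in-$\ell$ norm is unreachable; one must relax the target to $B^0_{p,\infty}$ and pull the supremum over $\ell$ outside the sum, using only the $\ell^1$-summability of $(a_k)$ furnished by $r=1$. This endpoint geometry is the sole reason the target space weakens between the two cases of (2), and is the one place where the naive scheme does not apply directly.
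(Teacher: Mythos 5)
This lemma is quoted from the reference [BCD] and the paper gives no proof of its own, so there is nothing to compare against except the standard textbook argument. Your sketch is exactly that argument — spectral localization of $S_{j-1}u\,\Delta_j v$ in dyadic annuli and of $\Delta_k u\,\Delta_{k'}v$ in dyadic balls, H\"older on each block, then discrete Young/H\"older in the dyadic index, with the correct identification of where $t<0$, $s_1+s_2>0$, and the endpoint weakening to $B^0_{p,\infty}$ enter — and it is correct.
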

  
   Secondly, we state the following local well-posedness for the Cauchy problem \eqref{(1.3)}.
  \begin{lemm}\cite{himonas2016novikov}\label{Lemma 2.6}
  	If $s > \frac{3}{2}$ and $(u_0, v_0)$ on the line or on the circle, then there exists $T > 0,$ and a unique solution $(u,v) \in C([0,T];H^s \times H^s)$ of the Cauchy problem \eqref{(1.3)} satisfying the following size estimate and lifespan
  	\begin{align*}
  		\Vert (u(t),v(t))\Vert_{H^s} \le \sqrt{2}\Vert (u_0,v_0)\Vert_{H^s}, \quad \text{for} \quad 0\le t \le T=\frac{1}{4c_s\Vert (u_0,v_0)\Vert_{H^s}^2},
  	\end{align*}
  	where $c_s$ is a constant depending on $s$.
  \end{lemm}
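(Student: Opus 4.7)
The plan is to prove the local well-posedness statement via a classical Kato-type iteration scheme combined with sharp $H^s$ energy estimates. I would first regularize \eqref{(1.3)} using Friedrichs mollifiers $J_\epsilon$, producing a sequence of smooth approximate solutions $(u^\epsilon, v^\epsilon)$ defined by an ODE in Sobolev space, then derive uniform $H^s$ bounds that survive the limit $\epsilon \to 0$.

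The heart of the argument is the a priori estimate. Applying $D^s = (1-\partial_x^2)^{s/2}$ to the first equation of \eqref{(1.3)}, multiplying by $D^s u$ and integrating, the convective term $uvu_x$ is controlled via the Kato--Ponce commutator estimate
\[
\|[D^s, uv]\, u_x\|_{L^2} \le C\bigl(\|\partial_x(uv)\|_{L^\infty}\|D^s u\|_{L^2} + \|D^s(uv)\|_{L^2}\|u_x\|_{L^\infty}\bigr),
\]
combined with the algebra property of $H^s$ for $s > 1/2$ and the Sobolev embedding $H^s \hookrightarrow W^{1,\infty}$ for $s > 3/2$. For the nonlocal forcing $p*(\cdots)$ the smoothing effect $p*= (1 - \partial_x^2)^{-1} \colon H^{s-2} \to H^s$ allows each of the four pieces $3uvu_x$, $2uv_x u_{xx}$, $2u_x^2 v_x$, $uv_{xx}u_x$ to be bounded in $H^s$ by $C_s \|u\|_{H^s}^2 \|v\|_{H^s}$. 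Carrying out the analogous estimate for $v$ and summing gives
\[
\frac{d}{dt}\bigl(\|u\|_{H^s}^2 + \|v\|_{H^s}^2\bigr) \le 4 c_s \bigl(\|u\|_{H^s}^2 + \|v\|_{H^s}^2\bigr)^2,
\]
so with $F(t) = \|u(t)\|_{H^s}^2 + \|v(t)\|_{H^s}^2$ an ODE comparison yields $F(t) \le F(0)/(1 - 4 c_s F(0) t)$, hence $F(t) \le 2 F(0)$ for $t \le 1/(4 c_s F(0))$, which is precisely the stated size estimate and lifespan.

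Existence on $[0,T]$ then follows by passing $\epsilon \to 0$: the uniform $H^s$ bounds combined with Aubin--Lions compactness give a limit in $C([0,T]; H^{s'})$ for some $s' < s$, which is upgraded to $C([0,T]; H^s)$ by a Bona--Smith approximation argument. Uniqueness comes from an energy estimate on the difference $(u_1 - u_2, v_1 - v_2)$ in $L^2$ (or in $H^{s-1}$), where $s > 3/2$ is the minimal regularity needed to close the estimate via $H^s \hookrightarrow W^{1,\infty}$.

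The main obstacle is the careful extraction of the explicit constant $c_s$ from the mixed-derivative nonlocal terms $p*(uv_{xx}u_x)$ and $p*(u_x^2 v_x)$: one must exploit the two-derivative smoothing of $p*$ and apply product estimates in $H^{s-2}$ without losing regularity, so that the resulting constant depends only on $s$ and the $H^s$ norm of $(u,v)$. This is what produces the sharp quadratic lifespan $T = 1/(4 c_s \|(u_0,v_0)\|_{H^s}^2)$ rather than a weaker estimate involving lower-order norms of the data.
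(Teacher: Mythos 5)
The paper does not prove this lemma at all: it is quoted verbatim from \cite{himonas2016novikov}, and your mollification-plus-energy-estimate scheme is essentially the argument used there, so your approach matches the (cited) proof. One small arithmetic point: from $F'\le 4c_sF^2$ one gets $F(t)\le F(0)/(1-4c_sF(0)t)\le 2F(0)$ only for $t\le 1/(8c_sF(0))$, not $t\le 1/(4c_sF(0))$; to land exactly on the stated constants you want the differential inequality in the form $F'\le 2c_sF^2$ (harmless here, since $c_s$ is an unspecified constant depending on $s$).
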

  
 Finally, we need the following useful results, which will be the key to proving our main ideas.

\begin{prop}\label{Proposition 2.7}
	Let $u \in W^{1,r}\left(\mathbb{R}\right),$ with $1 \le r \le \infty,$ then we have
	\begin{align*}
		2|u|^r \le r\Vert u\Vert^r_{W^{1,r}}.
	\end{align*}
\end{prop}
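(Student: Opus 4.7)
The plan is to combine the fundamental theorem of calculus with Young's inequality, producing a quantitative form of the one-dimensional Sobolev embedding $W^{1,r}(\mathbb{R})\hookrightarrow L^\infty(\mathbb{R})$. After passing to the absolutely continuous representative of $u$, one has $\lim_{|y|\to\infty}u(y)=0$ for $r<\infty$, so for any fixed $x\in\mathbb{R}$ the FTC applied to $|u|^r$ yields two representations
\begin{align*}
|u(x)|^{r}=\int_{-\infty}^{x} r|u(y)|^{r-1}\,\mathrm{sgn}(u(y))\,u'(y)\,dy=-\int_{x}^{\infty} r|u(y)|^{r-1}\,\mathrm{sgn}(u(y))\,u'(y)\,dy.
\end{align*}
Adding these and applying the triangle inequality gives the master bound
\begin{align*}
2|u(x)|^{r}\leq r\int_{\mathbb{R}}|u(y)|^{r-1}|u'(y)|\,dy.
\end{align*}

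Next I would close the estimate with Young's inequality (equivalently weighted AM--GM) with conjugate exponents $r/(r-1)$ and $r$, which gives the pointwise bound $|u|^{r-1}|u'|\leq\frac{r-1}{r}|u|^{r}+\frac{1}{r}|u'|^{r}$. Integrating and substituting into the master bound yields
\begin{align*}
2|u(x)|^{r}\leq (r-1)\|u\|_{L^{r}}^{r}+\|u'\|_{L^{r}}^{r}\leq r\bigl(\|u\|_{L^{r}}^{r}+\|u'\|_{L^{r}}^{r}\bigr)=r\|u\|_{W^{1,r}}^{r},
\end{align*}
which is the asserted inequality. The boundary cases are immediate: for $r=1$ the Young step is unneeded since $|u|^{r-1}|u'|=|u'|$ and one reads off $2|u(x)|\leq\|u'\|_{L^{1}}\leq\|u\|_{W^{1,1}}$ directly; for $r=\infty$ the claim reduces to $|u(x)|\leq\|u\|_{L^{\infty}}\leq\|u\|_{W^{1,\infty}}$.

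There is no substantive obstacle here, as this is a standard Morrey-type estimate with an explicit constant; the only technical care needed is the justification of the chain rule for $|u|^{r}$ and the decay of $u$ at $\pm\infty$, both of which follow from the classical characterization of $W^{1,r}(\mathbb{R})$-functions ($r<\infty$) as absolutely continuous functions vanishing at infinity. The sharpness of the constant is not needed for the blow-up applications in Theorems \ref{Theorem 1.3} and \ref{Theorem 1.4}, where the exponents $r^{2/r}$ and $((r-1)/r)^{(r-1)/r}$ appearing in $c_{2}$ are precisely the Young constants produced by this proof.
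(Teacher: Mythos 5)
Your proof is correct and follows essentially the same route as the paper: both derive the master bound $2|u(x)|^r \le r\int_{\mathbb{R}}|u|^{r-1}|u'|\,dy$ by applying the fundamental theorem of calculus to $|u|^r$ from $-\infty$ and from $+\infty$ and adding (the paper regularizes via $u^2(u^2+\epsilon)^{(r-2)/2}$ rather than invoking the chain rule for the absolutely continuous representative directly). The only difference is the closing step, where the paper uses H\"older's inequality, $r\|u_x u^{r-1}\|_{L^1}\le r\|u_x\|_{L^r}\|u\|_{L^r}^{r-1}\le r\|u\|_{W^{1,r}}^r$, while you use Young's inequality; both yield the same constant.
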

\begin{proof}
	It is easy to see that when $r = 1,$  
	\begin{align*}
		2|u| \le \int_{x}^{\infty} |u_x| ds + \int_{-\infty}^{x} |u_x| ds \le 2\| u\|_{W^{1,1}}
	\end{align*}
	For $r>1,$ consider
	\begin{align*}
		2|u|^r =&\lim_{\epsilon \rightarrow 0} 2u^2(u^2 + \epsilon)^{\frac{r-2}{2}} 
		\\=&\lim_{\epsilon \rightarrow 0} \int_{-\infty}^{x}  2uu_x(u^2 + \epsilon)^{\frac{r-2}{2}} + (r-2)u^2(u^2 + \epsilon)^{\frac{r-4}{2}}uu_x ds\\
		& -\int_{x}^{\infty} 2uu_x(u^2 + \epsilon)^{\frac{r-2}{2}} + (r-2)u^2(u^2 + \epsilon)^{\frac{r-4}{2}}uu_xds
		\\= & \int_{-\infty}^{x} ruu_x|u|^{r-2} ds - \int_{x}^{\infty} ruu_x|u|^{r-2}ds.
	\end{align*}
	Thus, one gets
	\begin{align*}
		2|u|^r \le r\Vert u_xu^{r-1}\Vert_{L^1}.
	\end{align*}
	Hence, we have
	\begin{align*}
		2|u|^r &\le r\Vert u_x\Vert_r\Vert u^{r-1}\Vert_{\frac{r}{r-1}}= r\Vert u_x\Vert_r\Vert u\Vert_r^{r-1}.
	\end{align*}
	Therefore, we deduce that
	\begin{align*}
		2|u|^r \le r\Vert u\Vert^r_{W^{1,r}}.
	\end{align*}
\end{proof}
  
 \begin{lemm}\cite{chen2016blowup}\label{Lemma 2.8}
 	Let $f\in C^1(\mathbb{R}), a > 0, b > 0$ and $f(0) < -\sqrt{\frac{b}{a}}.$
 	If \begin{align*}
 		f^{\prime}(t)\leq-af^2(t)+b,
 	\end{align*}
 	then
 	\begin{align*}
 		f(t)\to-\infty\quad\mathrm{as}\quad t\to t^*\leq\frac{1}{2\sqrt{ab}}\ln\left(\frac{f(0)-\sqrt{\frac{b}{a}}}{f(0)+\sqrt{\frac{b}{a}}}\right).
 	\end{align*}
 \end{lemm}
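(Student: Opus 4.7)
The plan is to treat the statement as an ODE differential inequality and obtain finite-time blow-up by separation of variables, after first confining $f$ to the region $\{f<-\sqrt{b/a}\}$ where the quadratic term dominates. Whenever $f(t)<-\sqrt{b/a}$ one has $f'(t)\le -af^2(t)+b<0$, so combining $f(0)<-\sqrt{b/a}$ with continuity of $f$ and a standard continuation argument shows that $f(t)<-\sqrt{b/a}$, and in particular $f^2(t)-b/a>0$, for every $t$ in the maximal forward interval on which $f$ stays finite.

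Given this confinement, I would rewrite the hypothesis as $f'(t)/(f^2(t)-b/a)\le -a$, which is legitimate because we divide a nonpositive quantity by a positive one, and integrate using the partial fraction identity
\[
\frac{1}{f^2-\frac{b}{a}}=\frac{1}{2\sqrt{b/a}}\left(\frac{1}{f-\sqrt{b/a}}-\frac{1}{f+\sqrt{b/a}}\right).
\]
Integration from $0$ to $t$ followed by multiplication by $2\sqrt{b/a}$ (and the identity $a\cdot 2\sqrt{b/a}=2\sqrt{ab}$) produces
\[
\ln\left|\frac{f(t)-\sqrt{b/a}}{f(t)+\sqrt{b/a}}\right|-\ln\left|\frac{f(0)-\sqrt{b/a}}{f(0)+\sqrt{b/a}}\right|\le -2\sqrt{ab}\,t.
\]

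Since both $f(t)-\sqrt{b/a}$ and $f(t)+\sqrt{b/a}$ are negative in our region with the former strictly more negative, the absolute-value ratio exceeds $1$ and the left-hand logarithm is nonnegative; rearranging therefore yields the a priori bound $t\le\frac{1}{2\sqrt{ab}}\ln\bigl|\frac{f(0)-\sqrt{b/a}}{f(0)+\sqrt{b/a}}\bigr|$ for every $t$ on which $f$ remains finite. Combined with the barrier argument forbidding $f$ from escaping upward through $-\sqrt{b/a}$, the only remaining possibility is finite-time blow-up $f(t)\to-\infty$ no later than the claimed time $t^*$. The main obstacle is really bookkeeping rather than analysis: one must carefully track signs so that the division is inequality-preserving and so that the absolute-value ratio coincides with the bare ratio in the statement (which it does, since numerator and denominator share sign on our region), so that $2\sqrt{b/a}\cdot a=2\sqrt{ab}$ appears correctly, and so that the a priori time bound is read off the correct side of the inequality. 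Once these sign checks are in place, the argument is the standard ODE blow-up template.
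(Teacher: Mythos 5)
The paper does not prove Lemma \ref{Lemma 2.8} at all: it is imported verbatim from the cited reference \cite{chen2016blowup}, so there is no in-paper argument to compare yours against. Your proof is the standard Riccati-comparison argument and is correct: the barrier step ($f'<0$ wherever $f<-\sqrt{b/a}$, hence $f$ stays trapped below $-\sqrt{b/a}$), the division by the positive quantity $f^2-b/a$, the partial-fraction integration, and the sign bookkeeping showing that the absolute-value ratio equals the bare ratio and exceeds $1$ are all in order. The only point worth one extra line is the final passage from the a priori bound $t<t^*$ to the actual divergence $f(t)\to-\infty$: inverting your integrated inequality gives the explicit pointwise estimate
\begin{align*}
\frac{-f(t)+\sqrt{b/a}}{-f(t)-\sqrt{b/a}}\leq\frac{f(0)-\sqrt{b/a}}{f(0)+\sqrt{b/a}}\,e^{-2\sqrt{ab}\,t},
\end{align*}
whose right-hand side decreases to $1$ as $t\uparrow t^*$, forcing $-f(t)-\sqrt{b/a}\to+\infty$; stating this makes the blow-up conclusion fully explicit rather than an elimination of alternatives.
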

 
 \section{Blow-up criteria}
 \textbf{Proof of the Theorem \ref{Theorem 1.1}}:\\
  Together with \eqref{(1.3)}, it is easy to check that
 \begin{align}\label{(3.1)}
 	\| u_t\|_{B^2_{2,1}} \le \|vuu_x\|_{B^2_{2,1}} + \| F\|_{B^2_{2,1}},  
 \end{align}
 where we denote $F$ as follows
 \begin{align*}
 	F:=p * (3 u v u_x + 2uv_x u_{xx} + 2u_x^2 v_x + uv_{xx}u_x).
 \end{align*}
 Regarding the nonlocal term of \eqref{(3.1)}, we have
 \begin{align*}
 	\| F\|_{B^2_{2,1}} \le c\left(\| vuu_x\|_{B^{0}_{2,1}} + \| uu_xv_x\|_{B^{1}_{2,1}} + \| v_xu_x^2\|_{B^{0}_{2,1}} + \| uv_xu_{xx}\|_{B^{0}_{2,1}}\right).
 \end{align*}
 By the Bony decomposition and Lemma \ref{Lemma 2.5}, one gets that
 \begin{align*}
 	 &\| T_{uv_x} u_{xx}\|_{B^{0}_{2,1}} \le c\| uv_x\|_{L^{\infty}}\| u_{xx}\|_{B^0_{2,1}} \le c\| u\|_{L^{\infty}}\|v_x\|_{L^{\infty}}\| u\|_{B^2_{2,1}},
 	 \\& \| T_{u_{xx}} uv_x\|_{B^0_{2,1}} \le c\| u_{xx}\|_{B^{-1}_{\infty,\infty}}\| uv_x\|_{B^1_{2,1}} \le c\| u_x\|_{L^{\infty}}\|v_x\|_{L^{\infty}}\| u\|_{B^2_{2,1}},
 	 \\& \| R(u_{xx},uv_x)\|_{B^0_{2,1}}  \le c\| u_{xx}\|_{B^0_{2,1}}\| uv_x\|_{L^{\infty}} \le c\| u\|_{L^{\infty}}\|v_x\|_{L^{\infty}}\| u\|_{B^2_{2,1}}.
 \end{align*}
 We then obtain that
 \begin{align*}
 	\| uv_xu_{xx}\|_{B^0_{2,1}} \le c\| u\|_{W^{1,\infty}}\|v_x\|_{L^{\infty}}\| u\|_{B^2_{2,1}} \le c\| u\|_{W^{1,\infty}}\| v\|_{W^{1,\infty}}\| u\|_{B^2_{2,1}}.
 \end{align*}
 By a similar way, it is easy to see that
 \begin{align*}
 	&\| F\|_{B^2_{2,1}} \le c\| u\|_{W^{1,\infty}}\| v\|_{W^{1,\infty}}\| u\|_{B^2_{2,1}},
 	\\& \| uvu_x\|_{B^2_{2,1}} \le c\| v\|_{L^{\infty}}\| u_x\|_{L^{\infty}}\| u\|_{B^2_{2,1}} \le c\| u\|_{W^{1,\infty}}\| v\|_{W^{1,\infty}}\| u\|_{B^2_{2,1}}.
 \end{align*}
 Therefore, we have
 \begin{align}\label{(3.2)}
 	\| u_t\|_{B^2_{2,1}} \le c\| u\|_{W^{1,\infty}}\| v\|_{W^{1,\infty}}\| u\|_{B^2_{2,1}}.
 \end{align}
 The analogous inequality for $v$ reads
 \begin{align}\label{(3.3)}
 	\| v_t\|_{B^2_{2,1}} \le c\| u\|_{W^{1,\infty}}\| v\|_{W^{1,\infty}}\| v\|_{B^2_{2,1}}.
 \end{align}
 Adding \eqref{(3.2)} and \eqref{(3.3)}, we deduce that
 \begin{align*}
 	\dfrac{\mathrm{d}\left(\| v\|_{B^2_{2,1}} + \|u\|_{B^2_{2,1}}\right)}{\mathrm{d}t} \le c\| u\|_{W^{1,\infty}}\| v\|_{W^{1,\infty}} \left(\| v\|_{B^2_{2,1}} + \|u\|_{B^2_{2,1}}\right).
 \end{align*}
 Taking advantage of Gronwall's inequality, one gets
 \begin{align}\label{(3.4)}
 	\| v\|_{B^2_{2,1}} + \|u\|_{B^2_{2,1}} \le \left(\| v_0\|_{B^2_{2,1}} + \|u_0\|_{B^2_{2,1}}\right)e^{c\int_{0}^{t} \| u\|_{W^{1,\infty}}\| v\|_{W^{1,\infty}} d\tau}.
 \end{align}
 Hence, if $T < \infty$ satisfies $ \int_{0}^{T} \| u\|_{W^{1,\infty}}\| v\|_{W^{1,\infty}} d\tau < \infty$, then we have
 \[
 \limsup_{t \to T} \left( \| v\|_{B^2_{2,1}} + \|u\|_{B^2_{2,1}} \right) < \infty,
 \]
 which contracts the assumption that $T < \infty$ is the maximal existence time. This completes of the proof of the theorem.
$\hfill\square$

 \section{Blow-up}
 ~~~~In this section, we will construct some blow-up solutions to the system $(1.3)$. To achieve it, we need the following results.

\begin{prop}\label{Proposition 4.1}
	Assume that $n_0 \in L^{\infty}\left(\mathbb{R}\right)$ and $u_0 \in W^{1,1}\left(\mathbb{R}\right).$ Let $T^*$ be the maximal existence time of the corresponding strong solution $\left(u,v\right)$ to system \eqref{(1.3)}. Then we have
\begin{align}\label{(4.1)}
	\Vert n \Vert_{L^{\infty}}+\Vert u\Vert_{W^{1,1}} \le 2\left(\Vert n_0\Vert_{L^{\infty}}+\Vert u_0\Vert_{W^{1,1}}\right).
\end{align}
with
\begin{align*}
	t \le T_1=\frac{1}{80\left(\Vert u_0\Vert_{W^{1,1}}+\Vert n_0\Vert_{L^{\infty}}\right)^{2}}.
\end{align*} 
\end{prop}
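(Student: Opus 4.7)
The plan is to introduce the energy $E(t) := \|n(t)\|_{L^{\infty}} + \|u(t)\|_{W^{1,1}}$ and establish a cubic differential inequality $E'(t) \leq C_0\, E(t)^3$ for an absolute constant $C_0$; comparison with the scalar ODE $\psi' = C_0 \psi^3$ then yields $E(t) \leq 2 E(0)$ on an interval of length of order $E(0)^{-2}$, and tracking $C_0$ through the estimates gives exactly the stated $T_1$. I will control $\|n\|_{L^{\infty}}$ from the transport structure of $n$, and the two pieces of $\|u\|_{W^{1,1}} = \|u\|_{L^1} + \|u_x\|_{L^1}$ by multiplying the $u$-equation, respectively its $x$-derivative, by $\mathrm{sgn}(u)$ (respectively $\mathrm{sgn}(u_x)$) and integrating.

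For $\|n\|_{L^{\infty}}$: from $n_t + uv n_x + 3 u v_x n = 0$ one gets $\frac{d}{dt}\|n\|_{L^{\infty}} \leq 3\|u\|_{L^{\infty}}\|v_x\|_{L^{\infty}}\|n\|_{L^{\infty}}$. Young's inequality for $v_x = p_x * n$ (with $\|p_x\|_{L^1} = 1$) and Proposition \ref{Proposition 2.7} with $r = 1$ give $\|v_x\|_{L^{\infty}} \leq \|n\|_{L^{\infty}}$ and $\|u\|_{L^{\infty}} \leq \tfrac12 \|u\|_{W^{1,1}}$, so $\frac{d}{dt}\|n\|_{L^{\infty}} \leq \tfrac32 E^3$. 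For $\|u\|_{L^1}$, integration by parts on the transport term of the first equation in \eqref{(1.3)} produces a quantity bounded by $\tfrac12 \|v_x\|_{L^{\infty}}\|u\|_{L^{\infty}}\|u\|_{L^1}$, hence $\frac{d}{dt}\|u\|_{L^1} \leq \tfrac12 \|v_x\|_{L^{\infty}} \|u\|_{L^{\infty}} \|u\|_{L^1} + \|F\|_{L^1}$, where $F := p * G$ and $G := 3uvu_x + 2uv_xu_{xx} + 2u_x^2 v_x + uv_{xx}u_x$ is the convolved nonlinearity in \eqref{(1.3)}. For $\|u_x\|_{L^1}$ the same procedure applied to $\partial_x$ of the equation shows that the four local transport contributions cancel in pairs after one more integration by parts on $\int uv(|u_x|)_x\, dx$, leaving the clean bound $\frac{d}{dt}\|u_x\|_{L^1} \leq \|F_x\|_{L^1}$.

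The main obstacle is estimating $\|F\|_{L^1}$ and $\|F_x\|_{L^1}$ without ever invoking $\|u_{xx}\|_{L^1}$ (or equivalently $\|m\|_{L^1}$), neither of which is controlled by $E$. I bypass this with the algebraic rewriting
\[
G = \partial_x\bigl(u^2 v + 2 u u_x v_x\bigr) + \bigl(n u u_x - u^2 v_x\bigr),
\]
which follows from $(uu_x)_x = u u_{xx} + u_x^2$ (used to recast $2uv_x u_{xx} + 2 u_x^2 v_x = 2 v_x (u u_x)_x = 2\partial_x(u u_x v_x) - 2 v_{xx} u u_x$) together with the Helmholtz identity $v_{xx} = v - n$ (which removes the remaining $v_{xx}$). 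Convolving with $p$ shifts the exact derivative onto the kernel, giving
\[
F = p_x * \bigl(u^2 v + 2 u u_x v_x\bigr) - p * \bigl(u^2 v_x - n u u_x\bigr),
\]
\[
F_x = (p - \delta_0) * \bigl(u^2 v + 2 u u_x v_x\bigr) + p_x * \bigl(n u u_x - u^2 v_x\bigr),
\]
and Young's inequality bounds both in terms only of the $L^{\infty}$-norms of $u, v, v_x, n$ and the $L^1$-norms of $u, u_x$. Using $\|u\|_{L^{\infty}} \leq \tfrac12 \|u\|_{W^{1,1}}$ and $\|v\|_{L^{\infty}}, \|v_x\|_{L^{\infty}} \leq \|n\|_{L^{\infty}}$, each such product is at most a constant times $E(t)^3$. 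Summing the three evolution inequalities produces $E'(t) \leq C_0 E(t)^3$, and a standard continuity/bootstrap argument on this ODE delivers $E(t) \leq 2 E(0)$ on $[0, T_1]$, which is the content of \eqref{(4.1)}.
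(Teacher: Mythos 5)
Your proposal is correct and follows essentially the same route as the paper: control $\|n\|_{L^{\infty}}$ along the characteristics of $uv$, control $\|u\|_{L^1}$ and $\|u_x\|_{L^1}$ by (regularized) sign-multiplication, rewrite the convolved nonlinearity so that the derivative falling on $u_{xx}$ is shifted onto the kernel $p$ (thereby never needing $\|u_{xx}\|_{L^1}$), and close with a cubic Gronwall/bootstrap giving the doubling time $T_1$. The only differences are cosmetic: the paper uses the partial rewriting $F=p*(3uvu_x-uu_xv_{xx})+2p_x*(uu_xv_x)$ and bounds $\|v_{xx}\|_{L^\infty}\le 2\|n\|_{L^\infty}$ rather than your full-derivative extraction with $v_{xx}=v-n$, and your cancellation of the transport terms in the $\|u_x\|_{L^1}$ estimate is slightly cleaner; your constants sum to less than the paper's $10$, so the stated $T_1$ indeed follows.
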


\begin{proof}
	The characteristics $q(t,x)$ associated the Geng-Xue system \eqref{(1.1)}, which is given as follows
	\begin{align}\label{(4.2)}
		\begin{cases}\dfrac{\mathrm{d}}{\mathrm{d}t}q(t,x)=(uv)(t,q(t,x)),&(t,x)\in[0,T^*)\times\mathbb{R},\\q(0,x)=x,&x\in\mathbb{R}.\end{cases}
	\end{align}
	According to the classical theory of ordinary differential equations, we get the above equation has an unique solution
	\begin{align*}
		q(t,x) \in C^1\left([0,T^*) \times \mathbb{R},\mathbb{R}\right).
	\end{align*}
	Moreover, the map $x \to q(t,x)$ is an increasing diffeomorphism. In this way, we have
	\begin{align*}
		\frac{\mathrm{d}n\left(t,q\left(t,x\right)\right)}{\mathrm{d}t} &= n_t\left(t,q\left(t,x\right)\right)+n_x\left(t,q\left(t,x\right)\right) \\&= \left(n_t+n_xuv\right)\left(t,\left(t,x\right)\right) \\&= -3v_xun.
	\end{align*}
	Since $u=(1-\partial_{x}^{2})^{-1}m=p*m$ with $p(x)\triangleq\frac{1}{2}e^{-|x|}, u_{x}=(\partial_{x}p)*m.$ and $\Vert p\Vert_{L^1}=\Vert \partial_xp\Vert_{L^1}=1$, together with the Young's inequality, for any $s\in\mathbb{R},$ we have
	\begin{align*}
		\Vert u\Vert_{L^{\infty}} \le \Vert m\Vert_{L^{\infty}},\quad \Vert u_{x}\Vert_{L^{\infty}} \le \Vert m\Vert_{L^{\infty}},
	\end{align*}
	thus,
	\begin{align*}
		\Vert u_{xx}\Vert_{L^{\infty}} \le \Vert m\Vert_{L^{\infty}}+\Vert m\Vert_{L^{\infty}}=2\Vert m\Vert_{L^{\infty}}.
	\end{align*}
	In the similar way, one gets that
	\begin{align*}
		\Vert v\Vert_{L^{\infty}} \le \Vert n\Vert_{L^{\infty}}, \Vert v_{x}\Vert_{L^{\infty}} \le \Vert n\Vert_{L^{\infty}},\Vert v_{xx}\Vert_{L^{\infty}} \le 2\Vert n\Vert_{L^{\infty}}.
	\end{align*}
	It is easy to check that
	\begin{align*}
		|uv_xn| \le |u|\Vert v_x\Vert_{L^{\infty}}\Vert n\Vert_{L^{\infty}}.
	\end{align*}
	By Proposition \ref{Proposition 2.7}, we obtain that
	\begin{align*}
		|uv_xn| \le \frac{\Vert u\Vert_{W^{1,1}}\Vert v_x\Vert_{L^{\infty}}\Vert n\Vert_{L^{\infty}}}{2}.
	\end{align*}
	Then we have
	\begin{align*}
		\left|\frac{\mathrm{d} n\left(t,q\left(t,x\right)\right)}{\mathrm{d}t}\right| \le \frac{3}{2}\Vert u\Vert_{W^{1,1}}\Vert v_x\Vert_{L^{\infty}}\Vert n\Vert_{L^{\infty}}.
	\end{align*}
	Thus,
	\begin{align*}
		-\frac{3}{2}\Vert u\Vert_{W^{1,1}}\Vert v_x\Vert_{L^{\infty}}\Vert n\Vert_{L^{\infty}} \le \frac{\mathrm{d}n\left(t,q\left(t,x\right)\right)}{\mathrm{d}t} \le \frac{3}{2}\Vert u\Vert_{W^{1,1}}\Vert v_x\Vert_{L^{\infty}}\Vert n\Vert_{L^{\infty}}.
	\end{align*} 
	Integrating the above inequity with respect to $t$, we have 
	\begin{align*}
		-\frac{3}{2}\int_{0}^{t} \Vert n\left(s\right)\Vert^2_{L^{\infty}}\Vert u\left(s\right)\Vert_{W^{1,1}}ds+\Vert n_0\Vert_{L^{\infty}} \le \Vert n\Vert_{L^{\infty}} \le \frac{3}{2}\int_{0}^{t} \Vert n\left(s\right)\Vert^2_{L^{\infty}}\Vert u\left(s\right)\Vert_{W^{1,1}}ds+\Vert n_0\Vert_{L^{\infty}}. 
	\end{align*}
	Since $q\left(t,x\right)$ is a diffeomorphism of $\mathbb{R},$ we have
	\begin{align}\label{(4.3)}
		\Vert n\Vert_{L^{\infty}} \le \frac{3}{2}\int_{0}^{t} \Vert n\left(s\right)\Vert^2_{L^{\infty}}\Vert u\left(s\right)\Vert_{W^{1,1}}ds+\Vert n_0\Vert_{L^{\infty}}. 
	\end{align}
	Noting the system $\left(1.3\right)$ and differentiating the system $\left(1.3\right)_1$ to $x$, we infer that
	\begin{align*}
		&u_t+uvu_x+p*\left(3vuu_x-uu_xv_{xx}\right)+2p_x*uv_xu_x=0, \\& u_{xt}+vu_x^2-uv_xu_x+vuu_{xx}+p_x*\left(3vuu_x-uu_xv_{xx}\right)+2p*uv_xu_x=0.
	\end{align*}
	Since
	\begin{align*}
		\Vert u_t\Vert_{L^1}=\Vert uvu_x+p*\left(3vuu_x-uu_xv_{xx}\right)+2p_x*uv_xu_x\Vert_{L^1},
	\end{align*}
	applying the Young's inequality, one gets
	\begin{align*}
		&\Vert uvu_x\Vert_{L^1} \le \Vert u\Vert_{L^{\infty}}\Vert v\Vert_{L^{\infty}}\Vert u_x\Vert_{L^1},
		\\&3\Vert p*uvu_x\Vert_{L^1} + \Vert p*uv_{xx}u_x\Vert_{L^1} \le \left(3\Vert v\Vert_{L^{\infty}} + \Vert v_{xx}\Vert_{L^{\infty}}\right)\Vert p\Vert_{L^1}\Vert u\Vert_{L^{\infty}}\Vert u_x\Vert_{L^1},
		\\&\Vert p_x*uv_xu_x\Vert_{L^1} \le \Vert p_x\Vert_{L^1}\Vert u\Vert_{L^{\infty}}\Vert v_x\Vert_{L^{\infty}}\Vert u_x\Vert_{L^1}.
	\end{align*}
	According to Proposition \ref{Proposition 2.7}, we have
	\begin{align*}
		\frac{\mathrm{d}\Vert u\Vert_{L^1}}{\mathrm{d}t} \le \Vert u_t\Vert_{L^1} \le 4\Vert n\left(s\right)\Vert_{L^{\infty}}\Vert u\left(s\right)\Vert_{W^{1,1}}^2.
	\end{align*}
	Integrating the above inequity with respect to $t$, we thus get
	\begin{align}\label{(4.4)}
		\Vert u\Vert_{L^1} \le \Vert u_0\Vert_{L^1}+4\int_{0}^{t} \Vert n\left(s\right)\Vert_{L^{\infty}}\Vert u\left(s\right)\Vert_{W^{1,1}}^2ds.
	\end{align}
	As
	\begin{align*}
		\Vert u_x\Vert_{L^1}=\lim_{\epsilon \rightarrow 0} \langle u_x,u_x\left(u_x^2+\epsilon\right)^{-\frac{1}{2}}\rangle.
	\end{align*}
	Performing integration by parts, we deduce that
	\begin{align*}
		\lim_{\epsilon \rightarrow 0} \int_{\mathbb{R}} uvu_{xx}u_x\left(u_x^2+\epsilon\right)^{-\frac{1}{2}} dx &= \lim_{\epsilon \rightarrow 0} \int_{\mathbb{R}} uvd\left(u_x^2+\epsilon\right)^{\frac{1}{2}} 
		\\&= \lim_{\epsilon \rightarrow 0} -\int_{\mathbb{R}} \left(u_x^2+\epsilon\right)^{\frac{1}{2}}\left(uv_x+u_xv\right)dx 
		\\&= -\int_{\mathbb{R}} \left(u_x^2\right)^{\frac{1}{2}}\left(uv_x+u_xv\right)dx.
	\end{align*}
	We then have
	\begin{align*}
		\Vert u_{xt}\Vert_{L^1} \le \Vert p_x*\left(3uvu_x-uu_xv_{xx}\right)\Vert_{L^1}+2\Vert p*uv_xu_x\Vert_{L^1}+2\Vert uu_xv_x\Vert_{L^1}.
	\end{align*}
	Now applying the Young's inequality, one gets
	\begin{align*}
		&\Vert uv_xu_x\Vert_{L^1} \le \Vert u\Vert_{L^{\infty}}\Vert v_x\Vert_{L^{\infty}}\Vert u_x\Vert_{L^1},
		\\&\Vert p*uv_xu_x\Vert_{L^1} \le \Vert p\Vert_{L^1}\Vert u\Vert_{L^{\infty}}\Vert v_x\Vert_{L^{\infty}}\Vert u_x\Vert_{L^1},
		\\&3\Vert p_x*uvu_x\Vert_{L^1} + \Vert p_x*uv_{xx}u_x\Vert_{L^1} \le  \left( 3\Vert v\Vert_{L^{\infty}} + \Vert v_{xx}\Vert_{L^{\infty}} \right)\Vert p_x\Vert_{L^1}\Vert u\Vert_{L^{\infty}}\Vert u_x\Vert_{L^1}.
	\end{align*}
	Hence, by Proposition \ref{Proposition 2.7}, we obtain that
	\begin{align*}
		\frac{\mathrm{d}\Vert u_x\Vert_{L^1}}{\mathrm{d}t} \le \Vert u_{xt}\Vert_{L^1} \le \frac{9}{2}\Vert n\Vert_{L^{\infty}}\Vert u\Vert_{W^{1,1}}^2.
	\end{align*}
	Integrating the above inequality with respect to $t$, we have
	\begin{align}\label{(4.5)}
		\Vert u_x\Vert_{L^1} \le \Vert u_{0,x}\Vert_{L^1}+\frac{9}{2}\int_{0}^{t} \Vert n\left(s\right)\Vert_{L^{\infty}}\Vert u\left(s\right)\Vert_{W^{1,1}}^2ds.
	\end{align}
	Now using \eqref{(4.4)} and \eqref{(4.5)}, we then get
	\begin{align}\label{(4.6)}
		\Vert u\Vert_{W^{1,1}} \le \Vert u_{0,x}\Vert_{L^1}+\frac{17}{2}\int_{0}^{t} \Vert n\left(s\right)\Vert_{L^{\infty}}\Vert u\left(s\right)\Vert_{W^{1,1}}^2ds.
	\end{align}
	It then follows from \eqref{(4.3)} and \eqref{(4.6)}, we have
	\begin{align*}
		\Vert n\Vert_{L^{\infty}}+\Vert u\Vert_{W^{1,1}} \le \Vert n_0\Vert_{L^{\infty}}+\Vert u_0\Vert_{W^{1,1}}+\int_{0}^{t}\frac{3}{2} \Vert n\left(s\right)\Vert^2_{L^{\infty}}\Vert u\left(s\right)\Vert_{W^{1,1}}+\frac{17}{2}\Vert n\left(s\right)\Vert_{L^{\infty}}\Vert u\left(s\right)\Vert_{W^{1,1}}^2ds. \
	\end{align*}
	It is easy to see that
	\begin{align*}
		\Vert n\Vert_{L^{\infty}}+\Vert u\Vert_{W^{1,1}} \le \Vert n_0\Vert_{L^{\infty}}+\Vert u_0\Vert_{W^{1,1}}+10\int_{0}^{t} \left(\Vert n\Vert_{L^{\infty}}+\Vert u\Vert_{W^{1,1}}\right)^3ds.
	\end{align*}
	Now we obtain
	\begin{align*}
		\Vert u\Vert_{W^{1,1}}+\Vert n\Vert_{L^{\infty}} \le 2\left(\Vert u_0\Vert_{W^{1,1}}+\Vert n_0\Vert_{L^{\infty}}\right),
	\end{align*}
	with
	\begin{align*}
		t \le T_1=\frac{1}{80\left(\Vert u_0\Vert_{W^{1,1}}+\Vert n_0\Vert_{L^{\infty}}\right)^{2}}.
	\end{align*}
	The proof is therefore complete.
	
\end{proof}

 \begin{prop}\label{Proposition 4.2}
 	Assume that $u_0 \in W^{1,1}\left(\mathbb{R}\right)$, $v_0 \in L^{\infty}\left(\mathbb{R}\right)$ and there exists a point $x_0$ such that $v_0\left(x_0\right) > 0.$ Let $T_0$ be the maximal existence time of the corresponding strong solution $\left(u,v\right)$ to system \eqref{(1.3)}. Then we have
 	\begin{align*}
 		v \ge \frac{v_0\left(x_0\right)}{2},
 	\end{align*}
 	with
 	\begin{align*}
 		t \le T_2=\frac{v_0\left(x_0\right)}{80\left(\Vert u_0\Vert_{W^{1,1}}+\Vert n_0\Vert_{L^{\infty}}\right)^3} \le T_1.
 	\end{align*} 
 \end{prop}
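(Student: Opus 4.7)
The plan is to estimate $\frac{d}{dt}v(t, q(t, x_0))$ along the characteristic $q$ from \eqref{(4.2)} through $x_0$ and then integrate in time, using the a priori bound of Proposition \ref{Proposition 4.1}. Since $v = p*n$, differentiating along the flow gives
\begin{equation*}
\frac{d}{dt}v(t, q(t, x_0)) = (v_t + uv\, v_x)\bigl(t, q(t, x_0)\bigr) = \bigl(p*n_t + uv\, v_x\bigr)\bigl(t, q(t, x_0)\bigr).
\end{equation*}
Substituting the $n$-equation $n_t = -uv\, n_x - 3uv_x n$ into $p*n_t$ and integrating by parts once inside the convolution (to shift the $n_x$ derivative onto $p$) yields the identity
\begin{equation*}
v_t = -p_x*(uvn) + p*(u_x v n) - 2\, p*(uv_x n).
\end{equation*}
This rewriting is the crucial step: working directly with the $v$-equation in \eqref{(1.3)} would leave $u_{xx}$ and $v_{xx}$ inside the right-hand side, and Proposition \ref{Proposition 4.1} gives no useful bound on those quantities.

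I would then estimate each term by Young's inequality in the form $\|p*f\|_{L^\infty}, \|p_x*f\|_{L^\infty} \le \tfrac{1}{2}\|f\|_{L^1}$ (since $\|p\|_{L^\infty} = \|p_x\|_{L^\infty} = \tfrac{1}{2}$), combined with H\"older to pull out one $L^1$ factor and two $L^\infty$ factors. Using $\|v\|_{L^\infty}, \|v_x\|_{L^\infty} \le \|n\|_{L^\infty}$ and the embedding $\|u\|_{L^\infty} \le \tfrac{1}{2}\|u\|_{W^{1,1}}$ supplied by Proposition \ref{Proposition 2.7}, together with the transport term $|uv\, v_x| \le \tfrac{1}{2}\|u\|_{W^{1,1}}\|n\|_{L^\infty}^2$, every term is controlled by a constant multiple of $\|u\|_{W^{1,1}}\|n\|_{L^\infty}^2$. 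Hence
\begin{equation*}
\left|\frac{d}{dt}v\bigl(t, q(t, x_0)\bigr)\right| \le C\, \|u\|_{W^{1,1}}\|n\|_{L^\infty}^2 \le C\bigl(\|u\|_{W^{1,1}} + \|n\|_{L^\infty}\bigr)^3
\end{equation*}
for a universal constant $C$.

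For $t \le T_1$, Proposition \ref{Proposition 4.1} gives $\|u\|_{W^{1,1}} + \|n\|_{L^\infty} \le 2(\|u_0\|_{W^{1,1}} + \|n_0\|_{L^\infty})$, so the right-hand side above is bounded by an absolute multiple of $(\|u_0\|_{W^{1,1}} + \|n_0\|_{L^\infty})^3$. Integrating in $t$ yields
\begin{equation*}
v\bigl(t, q(t, x_0)\bigr) \ge v_0(x_0) - C'\, t\, \bigl(\|u_0\|_{W^{1,1}} + \|n_0\|_{L^\infty}\bigr)^3,
\end{equation*}
which is $\ge v_0(x_0)/2$ on $[0, T_2]$ once the numerical constant is chosen so that $2C' \le 80$. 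The ordering $T_2 \le T_1$ follows automatically, since $v_0(x_0) \le \|v_0\|_{L^\infty} \le \|n_0\|_{L^\infty} \le \|u_0\|_{W^{1,1}} + \|n_0\|_{L^\infty}$. The main obstacle, as already indicated, is the lack of a priori $L^\infty$ control on $u_x$, $u_{xx}$, and $v_{xx}$: the integration-by-parts identity for $v_t = p*n_t$ is precisely what removes this obstruction and makes a bound in terms of $\|u\|_{W^{1,1}}$ and $\|n\|_{L^\infty}$ alone possible.
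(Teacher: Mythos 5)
Your proof is correct and takes essentially the same route as the paper's: both estimate the material derivative of $v$ along the characteristic by a constant times $\Vert u\Vert_{W^{1,1}}\Vert n\Vert_{L^\infty}^2$ after an integration by parts inside the convolution (the paper rewrites the nonlocal term of the $v$-equation as $p*\left(3uvv_x+u_xv_x^2+vu_xv_{xx}\right)+p_x*\left(vv_xu_x\right)$, which removes $u_{xx}$ from under an $L^1$ norm exactly as your $v=p*n$ identity does), then integrate in time and invoke Proposition \ref{Proposition 4.1}. One minor inaccuracy in your motivation: $v_{xx}$ is not actually an obstruction, since $\Vert v_{xx}\Vert_{L^\infty}\le 2\Vert n\Vert_{L^\infty}$; the genuine issue is only $u_{xx}$, whose $L^1$ norm is not controlled by $\Vert u\Vert_{W^{1,1}}$.
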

 \begin{proof}
 	Consider the system \eqref{(1.3)} along the characteristics $q\left(t,x\right)$, we then have
 	\begin{align*}
 		v_t+p*\left(3uvv_x+u_xv_x^2+vu_xv_{xx}\right)+p_x*vv_xu_x=0.
 	\end{align*} 
 	By the Young's inequality, one gets
 	\begin{align*}
 		&\Vert p*uv_xv\Vert_{L^{\infty}} \le \Vert p\Vert_{L^{\infty}}\Vert v_x\Vert_{L^{\infty}}\Vert v\Vert_{L^{\infty}}\Vert u\Vert_{L^{1}},
 		\\&\Vert p*u_xv_xv_x\Vert_{L^{\infty}} + \Vert p*u_xv_{xx}v\Vert_{L^{\infty}} \le \left(\Vert v_x\Vert_{L^{\infty}}\Vert v_x\Vert_{L^{\infty}} +\Vert v\Vert_{L^{\infty}}\Vert v_{xx}\Vert_{L^{\infty}} \right)\Vert p\Vert_{L^{\infty}}\Vert u_x\Vert_{L^{1}},
 		\\&\Vert p_x*u_xv_xv\Vert_{L^{\infty}} \le \Vert p_x\Vert_{L^{\infty}}\Vert v_x\Vert_{L^{\infty}}\Vert v\Vert_{L^{\infty}}\Vert u_x\Vert_{L^{1}}.
 	\end{align*}
 	Thus, 
 	\begin{align*}
 		|v_t(t,q(t,x))| \le \frac{7}{2}\Vert n\left(t\right)\Vert^2_{L^{\infty}}\Vert u\left(t\right)\Vert_{W^{1,1}}.
 	\end{align*}
 	Integrating the above inequality with respect to $t$, we have
 	\begin{align*}
 		-\frac{7}{2}\int_{0}^{t} \Vert n\left(s\right)\Vert^2_{L^{\infty}}\Vert u\left(s\right)\Vert_{W^{1,1}}ds+v_0\left(x_0\right) \le  v \le \frac{7}{2}\int_{0}^{t} \Vert n\left(s\right)\Vert^2_{L^{\infty}}\Vert u\left(s\right)\Vert_{W^{1,1}}ds+v_0\left(x_0\right). 
 	\end{align*}
 	Combining the Proposition \ref{Proposition 4.1} and the fact that $q(t,x)$ is diffeomorphism of $\mathbb{R}$, we have
 	\begin{align*}
 		v(t,q(t,x_0)) \ge \frac{v_0\left(x_0\right)}{2},
 	\end{align*}
 	with
 	\begin{align*}
 		t \le T_2=\frac{v_0\left(x_0\right)}{80\left(\Vert u_0\Vert_{W^{1,1}}+\Vert n_0\Vert_{L^{\infty}}\right)^3} \le T_1,
 	\end{align*}
 	which completes the proof of the proposition.
 \end{proof}
 
 Now, we are in a position to show our main theorem. \\
 \textbf{Proof of Theorem \ref{Theorem 1.3}}:\\
  Differentiating the system \eqref{(1.3)} to $x$, we deduce that
 	\begin{align*}
 		u_{xt}+vu_x^2-uv_xu_x+vuu_{xx}+p_x*\left(3vuu_x-uu_xv_{xx}\right)+2p*uv_xu_x=0.
 	\end{align*}
 	It is easy to see that
 	\begin{align*}
 		\left(u_{xt}+vu_x^2-uv_xu_x+p_x*\left(3vuu_x-uu_xv_{xx}\right)+2p*uv_xu_x\right)\left(t,q\left(t,x\right)\right)=0.
 	\end{align*}
 	Then using the Young's inequality, one gets
 	\begin{align*}
 		&\Vert p*uv_xu_x\Vert_{L^{\infty}} \le \Vert p\Vert_{L^{\infty}}\Vert u\Vert_{L^{\infty}}\Vert v_x\Vert_{L^{\infty}}\Vert u_x\Vert_{L^1},
 		\\&3\Vert p_x*uvu_x\Vert_{L^{\infty}} + \Vert p_x*uv_{xx}u_x\Vert_{L^{\infty}} \le \left(3\Vert v\Vert_{L^{\infty}} + \Vert v_{xx}\Vert_{L^{\infty}}\right)\Vert p_x\Vert_{L^{\infty}}\Vert u\Vert_{L^{\infty}}\Vert u_x\Vert_{L^1}.
 	\end{align*}
 	By Proposition \ref{Proposition 4.1} and Proposition \ref{Proposition 4.2}, we have
 	\begin{align*}
 		\frac{\mathrm{d}u_x\left(t,q\left(t,x_0\right)\right)}{\mathrm{d}t} 
 		&\le -\frac{v_0\left(t,x_0\right)}{4}u_x^2 + \frac{1}{v_0\left(x_0\right)}\|n\|_{L^{\infty}}^2\|u\|_{L^{\infty}}^2 + \frac{7}{4}\|n\|_{L^{\infty}}\|u\|_{W^{1,1}}^2 \\
 		&\le -\frac{v_0\left(t,x_0\right)}{4}u_x^2 + \frac{1}{4v_0\left(x_0\right)}\|n\|_{L^{\infty}}^2\|u\|_{W^{1,1}}^2 + \frac{7}{4}\|n\|_{L^{\infty}}\|u\|_{W^{1,1}}^2 \\
 		&\le -\frac{v_0\left(t,x_0\right)}{4}u_x^2 + \frac{4}{v_0\left(x_0\right)}\left(\|n_0\|_{L^{\infty}} + \|u_0\|_{W^{1,1}}\right)^4 + 14\left(\|n_0\|_{L^{\infty}} + \|u_0\|_{W^{1,1}}\right)^3.
 	\end{align*}
 	Then define $f:=u_x\left(t,q\left(t,x_0\right)\right)$, $a:=\frac{v_0(x_0)}{4}$ and $b_1:=\frac{4}{v_0\left(x_0\right)}\left(\Vert n_0\Vert_{L^{\infty}}+\Vert u_0\Vert_{W^{1,1}}\right)^4+14\left(\Vert n_0\Vert_{L^{\infty}}+\Vert u_0\Vert_{W^{1,1}}\right)^3.$
 	Thanks to \eqref{(1.4)}. we thus deduce that
 	\begin{align*}
 		\frac{1}{\sqrt{b_1v_0\left(x_0\right)}}\ln\left(\frac{\sqrt{v_0\left(x_0\right)}f\left(0\right)-\sqrt{b_1}}{\sqrt{v_0\left(x_0\right)}f\left(0\right)+\sqrt{b_1}}\right) \le T_2.
 	\end{align*}
 	Applying Lemma \ref{Lemma 2.8}, we have
 	\begin{align*}
 		\lim_{t\rightarrow T_0}f(t)=-\infty,
 	\end{align*}
 	with
 	\begin{align*}
 		T_0 \le \frac{1}{\sqrt{b_1v_0\left(x_0\right)}}\ln\left(\frac{\sqrt{v_0\left(x_0\right)}f\left(0\right)-\sqrt{b_1}}{\sqrt{v_0\left(x_0\right)}f\left(0\right)+\sqrt{b_1}}\right) \le T_2 \le T_1.
 	\end{align*}
 	which along with Lemma \ref{Lemma 2.8} yields the desired result.
 	$\hfill\square$\\
  
  \begin{rema}
  	For the variable $v$, we can also get a similar result.
  \end{rema}

\section{Application}
~~~~In this section, we study the blow-up and ill-posedness of the following Cauchy problem for the two-component b-family system with cubic nonlinearity. Before proving the blow-up of system \eqref{(1.5)}, we firstly present the following estimates.

\begin{prop}\label{Proposition 5.1}
	Assume that $v_0 \in L^{\infty}\left(\mathbb{R}\right)$ and $u_0 \in W^{1,r}\left(\mathbb{R}\right)$ with $2 \le r < \infty.$ Let $T_3$ be the maximal existence time of the corresponding strong solution $\left(u,v\right)$ to system \eqref{(1.5)} with $b= 1+\frac{2}{r}$. Then we have
	\begin{align}\label{(5.1)}
		\Vert n \Vert_{L^{\infty}}+\Vert u\Vert_{W^{1,r}}^r \le 2\left(\Vert n_0\Vert_{L^{\infty}}+\Vert u_0\Vert^p_{W^{1,r}}\right),
	\end{align}
	with
	\begin{align*}
		t \le T_4=\frac{1}{8\left(r^{\frac{1+r}{r}}+r^{\frac{1}{r}}+r^{\frac{1-r}{r}}+7r^{\frac{1}{r}}\left(r-1\right)^{\frac{r-1}{r}}\right)\left(\Vert u_0\Vert_{W^{1,r}}^r+\Vert n_0\Vert_{L^{\infty}}\right)^{\frac{r+1}{r}}}.
	\end{align*} 
\end{prop}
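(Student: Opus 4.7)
The plan is to mirror the proof of Proposition \ref{Proposition 4.1}, replacing the $L^1$ estimates on $u$ by $L^r$ estimates and invoking Proposition \ref{Proposition 2.7} at the general exponent $r$. First, I would set up the characteristics $q(t,x)$ defined by $\tfrac{d}{dt}q=uv$, $q(0,x)=x$, and push the third equation of \eqref{(1.2)} along $q$ to obtain
\begin{align*}
\tfrac{d}{dt}\bigl(n(t,q(t,x))\bigr) = -b\,u\,v_x\,n.
\end{align*}
Since $v=p*n$ gives $\|v\|_{L^\infty},\|v_x\|_{L^\infty}\le \|n\|_{L^\infty}$ and $\|v_{xx}\|_{L^\infty}\le 2\|n\|_{L^\infty}$, and Proposition \ref{Proposition 2.7} yields $\|u\|_{L^\infty}\le (r/2)^{1/r}\|u\|_{W^{1,r}}$, integration produces
\begin{align*}
\|n(t)\|_{L^\infty} \le \|n_0\|_{L^\infty} + b(r/2)^{1/r}\int_0^t \|u(s)\|_{W^{1,r}}\|n(s)\|_{L^\infty}^2\,ds.
\end{align*}

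To control $\|u\|_{W^{1,r}}^r = \|u\|_{L^r}^r + \|u_x\|_{L^r}^r$, I would first rewrite the nonlocal term in \eqref{(1.5)}${}_1$ so as to eliminate every explicit $u_{xx}$ factor. The identities $2uv_xu_{xx}+2u_x^2v_x = 2(uu_xv_x)_x-2uu_xv_{xx}$ and $(3-b)u_xv\,u_{xx} = \tfrac{3-b}{2}\bigl((u_x^2 v)_x - u_x^2 v_x\bigr)$ recast \eqref{(1.5)}${}_1$ as
\begin{align*}
u_t+uvu_x+p*(buvu_x)-p*(uu_xv_{xx})+2p_x*(uu_xv_x)+\tfrac{3-b}{2}\bigl[p_x*(u_x^2 v)-p*(u_x^2 v_x)\bigr]=0.
\end{align*}
Differentiating in $x$ and adding $uvu_{xx}$ to form the material derivative $\tfrac{d}{dt}(u_x\circ q)$ kills the remaining local $u_{xx}$ term, leaving only local combinations of $u,u_x,v,v_x$ together with convolutions of such products. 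Then $\tfrac{d}{dt}\|u\|_{L^r}^r = r\int u|u|^{r-2}u_t\,dx$ and its $u_x$ analogue reduce by H\"older to $r\|u\|_{L^r}^{r-1}\|u_t\|_{L^r}$ and $r\|u_x\|_{L^r}^{r-1}\|u_{xt}+uvu_{xx}\|_{L^r}$, so the task comes down to $L^r$ bounds on these two right-hand sides.

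The main obstacle is controlling the two residual $u_x^2$ convolutions that persist whenever $b=1+\tfrac{2}{r}\ne 3$; the naive $L^1*L^r$ Young bound would demand $\|u_x^2\|_{L^r}=\|u_x\|_{L^{2r}}^2$, a norm unavailable from the hypotheses. The remedy is to invoke Young's inequality at the sharp exponent pair $(r/(r-1),r/2)$, which gives
\begin{align*}
\|p*(u_x^2 v_x)\|_{L^r}\le \|p\|_{L^{r/(r-1)}}\|v_x\|_{L^\infty}\|u_x\|_{L^r}^2,\qquad \|p_x*(u_x^2 v)\|_{L^r}\le \|p_x\|_{L^{r/(r-1)}}\|v\|_{L^\infty}\|u_x\|_{L^r}^2,
\end{align*}
together with the explicit computation $\|p\|_{L^{r/(r-1)}} = \|p_x\|_{L^{r/(r-1)}} = 2^{-1/r}\bigl((r-1)/r\bigr)^{(r-1)/r}$, which is precisely the source of the factor $r^{1/r}(r-1)^{(r-1)/r}$ in the denominator of $T_4$. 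All the remaining convolutions carry only one power of $u_x$ and are handled by the standard $L^1*L^r$ Young bound, while the $L^\infty$ factors $\|u\|_{L^\infty},\|v\|_{L^\infty},\|v_x\|_{L^\infty},\|v_{xx}\|_{L^\infty}$ are absorbed into powers of $\|u\|_{W^{1,r}}$ and $\|n\|_{L^\infty}$ via Proposition \ref{Proposition 2.7} and $v=p*n$.

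Summing all the pieces and converting from $\|u\|_{L^r}^{r-1}\|u_t\|_{L^r}$ and its $u_x$ analogue to pure powers of $\|u\|_{W^{1,r}}^r$ and $\|n\|_{L^\infty}$, I arrive at a scalar differential inequality
\begin{align*}
\tfrac{d}{dt}\bigl(\|n\|_{L^\infty}+\|u\|_{W^{1,r}}^r\bigr)\le C_r\bigl(\|n\|_{L^\infty}+\|u\|_{W^{1,r}}^r\bigr)^{(2r+1)/r},
\end{align*}
with $C_r$ equal to $8$ times the parenthesised sum appearing in the denominator of $T_4$. Integrating this ODE starting from $F_0 = \|n_0\|_{L^\infty}+\|u_0\|_{W^{1,r}}^r$ and restricting $t\le T_4$ so that the solution stays below $2F_0$ yields the claimed lifespan and the doubling estimate \eqref{(5.1)}.
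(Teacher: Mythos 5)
Your outline coincides with the paper's own proof in every structural respect: the same characteristics, the same algebraic rewriting of \eqref{(1.5)} with $b=1+\tfrac{2}{r}$ so that the convolutions involve only $u,u_x,v,v_x,v_{xx}$ (your two identities reproduce exactly the paper's displayed form of the equation), the same Young inequality at the conjugate pair $\bigl(\tfrac{r}{r-1},\tfrac{r}{2}\bigr)$ to avoid $\Vert u_x\Vert_{L^{2r}}$ in the $u_x^2$ convolutions, and the same closing ODE. The estimates for $\Vert n\Vert_{L^{\infty}}$ and for $\Vert u\Vert_{L^r}^r$ go through as you describe.

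There is, however, a genuine gap in your $\Vert u_x\Vert_{L^r}^r$ step. H\"older applied to $\tfrac{d}{dt}\Vert u_x\Vert_{L^r}^r=r\int u_x|u_x|^{r-2}u_{xt}\,dx$ yields $r\Vert u_x\Vert_{L^r}^{r-1}\Vert u_{xt}\Vert_{L^r}$, not $r\Vert u_x\Vert_{L^r}^{r-1}\Vert u_{xt}+uvu_{xx}\Vert_{L^r}$; and even granting that reduction, the quantity $u_{xt}+uvu_{xx}=-\tfrac{1}{r}vu_x^2+uv_xu_x-p_x*(\cdots)-p*(\cdots)$ contains the \emph{local} term $\tfrac{1}{r}vu_x^2$, whose $L^r$ norm is $\tfrac{1}{r}\Vert v\Vert_{L^{\infty}}\Vert u_x\Vert_{L^{2r}}^2$ --- precisely the norm you correctly flag as unavailable, and here there is no convolution kernel to rescue you with a sharp Young exponent. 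The way out, which the paper implements through the $\epsilon$-regularized integration by parts $\lim_{\epsilon\to0}\int uvu_{xx}\,u_x(u_x^2+\epsilon)^{\frac{r-2}{2}}dx=-\tfrac{1}{r}\int|u_x|^r(uv_x+u_xv)\,dx$, is a cancellation: after pairing the equation with $ru_x|u_x|^{r-2}$, the transport term contributes $+\int(u_xv+uv_x)|u_x|^r\,dx$ while the local term $-\tfrac{1}{r}vu_x^2$ contributes $-\int v\,u_x|u_x|^r\,dx$, and the two cubic-in-$u_x$ pieces cancel exactly, leaving only $\int uv_x|u_x|^r\,dx\le\Vert u\Vert_{L^{\infty}}\Vert v_x\Vert_{L^{\infty}}\Vert u_x\Vert_{L^r}^r$. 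This cancellation is the reason the exponent $r$ is tied to $b=1+\tfrac{2}{r}$ (cf.\ Remark \ref{Remark 1.5}); without observing it your differential inequality does not close.
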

\begin{proof}
	The characteristics $q(t,x)$ associated the system \eqref{(1.2)}, which is given as follows
	\begin{align}\label{(5.2)}
		\begin{cases}\dfrac{\mathrm{d}}{\mathrm{d}t}q(t,x)=(uv)(t,q(t,x)),&(t,x)\in[0,T^*)\times\mathbb{R},\\q(0,x)=x,&x\in\mathbb{R}.\end{cases}
	\end{align}
	According to the classical theory of ordinary differential equations, we get the above equation has a unique solution
	\begin{align*}
		q(t,x) \in C^1\left([0,T^*) \times \mathbb{R},\mathbb{R}\right).
	\end{align*}
	Moreover, the map $x \to q(t,x)$ is an increasing diffeomorphism. In this way, we have
	\begin{align*}
		\frac{\mathrm{d}n\left(t,q\left(t,x\right)\right)}{\mathrm{d}t} &= n_t\left(t,q\left(t,x\right)\right)+n_x\left(t,q\left(t,x\right)\right) \\&= \left(n_t+n_xuv\right)\left(t,\left(t,x\right)\right) \\&= -\left(1+\frac{2}{r}\right)v_xun.
	\end{align*}
	It is easy to check that
	\begin{align*}
		|uv_xn| \le |u|\Vert v_x\Vert_{L^{\infty}}\Vert n\Vert_{L^{\infty}}.
	\end{align*}
	By Proposition \ref{Proposition 2.7}, we obtain that 
	\begin{align*}
		|uv_xn| \le \left(\frac{r}{2}\right)^{\frac{1}{r}}\Vert u\Vert_{W^{1,r}}\Vert v_x\Vert_{L^{\infty}}\Vert n\Vert_{L^{\infty}}.
	\end{align*}
	We then get
	\begin{align*}
		\left|\frac{\mathrm{d}n\left(t,q\left(t,x\right)\right)}{\mathrm{d}t}\right| \le \left(1+\frac{2}{r}\right)\left(\frac{r}{2}\right)^{\frac{1}{r}}\Vert u\Vert_{W^{1,r}}\Vert v_x\Vert_{L^{\infty}}\Vert n\Vert_{L^{\infty}}.
	\end{align*}
	Thus,
	\begin{align*}
		-\left(1+\frac{2}{r}\right)\left(\frac{r}{2}\right)^{\frac{1}{r}} \Vert u\Vert_{W^{1,r}}\Vert v_x\Vert_{L^{\infty}}\Vert n\Vert_{L^{\infty}} \le \frac{dn\left(t,q\left(t,x\right)\right)}{dt} \le \left(1+\frac{2}{r}\right)\left(\frac{r}{2}\right)^{\frac{1}{r}}\Vert u\Vert_{W^{1,r}}\Vert v_x\Vert_{L^{\infty}}\Vert n\Vert_{L^{\infty}}.
	\end{align*} 
	Integrating the above inequality with respect to $t$, we have 
	\begin{align*}
		&\Vert n\Vert_{L^{\infty}} \ge \Vert n_0\Vert_{L^{\infty}} - \left(1+\frac{2}{r}\right)\left(\frac{r}{2}\right)^{\frac{1}{r}}\int_{0}^{t} \Vert n\left(s\right)\Vert^2_{L^{\infty}}\Vert u\left(s\right)\Vert_{W^{1,r}}ds,
		\\&\Vert n\Vert_{L^{\infty}} \le \Vert n_0\Vert_{L^{\infty}} + \left(1+\frac{2}{r}\right)\left(\frac{r}{2}\right)^{\frac{1}{r}}\int_{0}^{t} \Vert n\left(s\right)\Vert^2_{L^{\infty}}\Vert u\left(s\right)\Vert_{W^{1,r}}ds. 
	\end{align*}
	Since $q\left(t,x\right)$ is a diffeomorphism of $\mathbb{R},$ one gets that
	\begin{align}\label{(5.3)}
		\Vert n\Vert_{L^{\infty}} \le \left(1+\frac{2}{r}\right)\left(\frac{r}{2}\right)^{\frac{1}{r}}\int_{0}^{t} \Vert n\left(s\right)\Vert^2_{L^{\infty}}\Vert u\left(s\right)\Vert_{W^{1,r}}ds+\Vert n_0\Vert_{L^{\infty}}. 
	\end{align}
	Noting system \eqref{(1.5)} and differentiating the system \eqref{(1.5)} to $x$, we deduce that
	\begin{align*}
		u_t+uvu_x+p*\left(\left(1+\frac{2}{r}\right)vuu_x-uu_xv_{xx}-\left(1-\frac{1}{r}\right)v_xu_x^2\right)+p_x*\left(2uv_xu_x+\left(1-\frac{1}{r}\right)vu_x^2\right)=0,  
	\end{align*}
	\begin{align*}
		u_{xt}+\frac{1}{r}vu_x^2-uv_xu_x+vuu_{xx}&+p_x*\left(\left(1+\frac{2}{r}\right)vuu_x-uu_xv_{xx}-\left(1-\frac{1}{r}\right)v_xu_x^2\right)\\&+p*\left(2uv_xu_x+\left(1-\frac{1}{r}\right)vu_x^2\right)=0.
	\end{align*}
	Using the fact that
	\begin{align*}
		\frac{\mathrm{d}\Vert u\Vert_{L^r}^r}{\mathrm{d}t}=r\langle u_t,u|u|^{r-2}\rangle \le r\Vert u_t\Vert_{L^r}\Vert u\Vert_{L^r}^{r-1}.
	\end{align*}
	As we have
	\begin{align*}
		\Vert u_t\Vert_{L^r}= \left\Vert uvu_x+p*\left(\left(1+\frac{2}{r}\right)vuu_x-uu_xv_{xx}-\left(1-\frac{1}{r}\right)v_xu_x^2\right)+p_x*\left(2uv_xu_x+\left(1-\frac{1}{r}\right)vu_x^2\right)\right\Vert_{L^r},
	\end{align*}
	and applying the Young's inequality, one gets
	\begin{align*}
		&\Vert uvu_x\Vert_{L^r} \le \Vert u\Vert_{L^\infty}\Vert v\Vert_{L^\infty}\Vert u_x\Vert_{L^r}, \\
		&\Vert p*u_xv_xu_x\Vert_{L^r} \le \Vert p\Vert_{L^{\frac{r}{r-1}}}\Vert u_x\Vert_{L^r}\Vert v_x\Vert_{L^\infty}\Vert u_x\Vert_{L^r}, \\
		&\left(1+\frac{2}{r}\right)\Vert p*uvu_x\Vert_{L^r} + \Vert p*uv_{xx}u_x\Vert_{L^r} \le \left(\left(1+\frac{2}{r}\right)\Vert v\Vert_{L^\infty} + \Vert v_{xx}\Vert_{L^\infty} \right) \\
		&\qquad \cdot\Vert p\Vert_{L^{\frac{r}{r-1}}}\Vert u\Vert_{L^r}\Vert u_x\Vert_{L^r}, \\
		&2\Vert p_x*uv_xu_x\Vert_{L^r} + \left(1-\frac{1}{r}\right)\Vert p_x*u_xvu_x\Vert_{L^r} \le \left(2\Vert u\Vert_{L^r}\Vert v_x\Vert_{L^\infty} + \left(1-\frac{1}{r}\right)\Vert u_x\Vert_{L^r}\Vert v\Vert_{L^\infty} \right) \\
		&\qquad \cdot\Vert p_x\Vert_{L^{\frac{r}{r-1}}}\Vert u_x\Vert_{L^r}.
	\end{align*}
	Then together with Proposition \ref{Proposition 2.7}, we obtain that
	\begin{align*}
		\frac{\mathrm{d}\Vert u\Vert_{L^r}^r}{\mathrm{d}t} \le r\Vert u_t\Vert_{L^r}\Vert u\Vert_{L^r}^{r-1} \le \left(\frac{r}{2}\right)^{\frac{1}{r}}\left(r+7\left(r-1\right)^{\frac{r-1}{r}}\right)\Vert n\left(s\right)\Vert_{L^{\infty}}\Vert u\left(s\right)\Vert_{W^{1,r}}^{r+1}.
	\end{align*}
	Integrating the above inequality with respect to $t$, we thus derive that
	\begin{align}\label{(5.4)}
		\Vert u\Vert_{L^r}^r \le \Vert u_0\Vert_{L^r}+\left(\frac{r}{2}\right)^{\frac{1}{r}}\left(r+7\left(r-1\right)^{\frac{r-1}{r}}\right)\int_{0}^{t} \Vert n\left(s\right)\Vert_{L^{\infty}}\Vert u\left(s\right)\Vert_{W^{1,r}}^{r+1}ds.
	\end{align}
	As
	\begin{align*}
		\Vert u_x\Vert_{L^r}^r=\lim_{\epsilon \rightarrow 0} \langle u_x,u_x\left(u_x^2+\epsilon\right)^{\frac{r-2}{2}}\rangle,
	\end{align*}
	Therefore, performing integration by parts, we deduce that
	\begin{align*}
		\lim_{\epsilon \rightarrow 0} \int_{\mathbb{R}} uvu_{xx}u_x\left(u_x^2+\epsilon\right)^{\frac{r-2}{2}}dx &= \lim_{\epsilon \rightarrow 0} \frac{1}{r}\int_{\mathbb{R}} uvd\left(u_x^2+\epsilon\right)^{\frac{r}{2}} 
		\\&= \lim_{\epsilon \rightarrow 0} -\frac{1}{r}\int_{\mathbb{R}} \left(u_x^2+\epsilon\right)^{\frac{r}{2}}\left(uv_x+u_xv\right)dx 
		\\&= -\frac{1}{r}\int_{\mathbb{R}} \left(u_x^2\right)^{\frac{r}{2}}\left(uv_x+u_xv\right)dx.
	\end{align*}
	It is easy to see that
	\begin{align*}
		\Vert u_{xt}\Vert_{L^r} 
		&\le \bigg\| -\left(1+\frac{1}{r}\right)uv_xu_x 
		+ p_x*\left(
		\left(1+\frac{2}{r}\right)vuu_x - uu_xv_{xx} - \left(1-\frac{1}{r}\right)v_xu_x^2
		\right) \\
		&\quad + p*\left(
		2uv_xu_x + \left(1-\frac{1}{r}\right)vu_x^2
		\right) \bigg\|_{L^r}.
	\end{align*}
	Then applying the Young's inequality, one gets
	\begin{align*}
		&\Vert uv_xu_x\Vert_{L^r} \le \Vert u\Vert_{L^{\infty}}\Vert v_x\Vert_{L^{\infty}}\Vert u_x\Vert_{L^r},
		\\&2\Vert p*uv_xu_x\Vert_{L^r} + \left(1-\frac{1}{r}\right)\Vert p*u_xvu_x\Vert_{L^r} \le \left(2\Vert u\Vert_{L^{r}}\Vert v_x\Vert_{L^{\infty}} + \left(1-\frac{1}{r}\right)\Vert u_x\Vert_{L^{r}}\Vert v\Vert_{L^{\infty}} \right)\Vert p\Vert_{L^{\frac{r}{r-1}}}\Vert u_x\Vert_{L^r},
		\\&\Vert p_x*u_xv_xu_x\Vert_{L^r} \le \Vert p_x\Vert_{L^{\frac{r}{r-1}}}\Vert u_x\Vert_{L^{r}}\Vert v_x\Vert_{L^{\infty}}\Vert u_x\Vert_{L^r},
		\\&\left(1+\frac{2}{r}\right)\Vert p_x*uvu_x\Vert_{L^r} + \Vert p_x*uv_{xx}u_x\Vert_{L^r}\le \left(\left(1+\frac{2}{r}\right)\Vert v\Vert_{L^{\infty}} + \Vert v_{xx}\Vert_{L^{\infty}}\right)\Vert p_x\Vert_{L^{\frac{r}{r-1}}}\Vert u\Vert_{L^{r}}\Vert u_x\Vert_{L^r}.
	\end{align*}
	According to Proposition \ref{Proposition 2.7}, we have
	\begin{align*}
		\frac{\mathrm{d}\Vert u_x\Vert_{L^r}^r}{\mathrm{d}t} &\le r\Vert u_{xt}\Vert_{L^r}\Vert u_x\Vert_{L^r}^{r-1} \\&\le \left(\frac{r}{2}\right)^{\frac{1}{r}}\left(r+1+7\left(r-1\right)^{\frac{r-1}{r}}\right)\Vert n\Vert_{L^{\infty}}\Vert u\Vert_{W^{1,r}}^{r+1}.
	\end{align*}
	Integrating the above inequality with respect to $t$, we derive that
	\begin{align}\label{(5.5)}
		\Vert u_x\Vert_{L^r}^r \le \Vert u_{0,x}\Vert_{L^r}^r+\left(\frac{r}{2}\right)^{\frac{1}{r}}\left(r+1+7\left(r-1\right)^{\frac{r-1}{r}}\right)\int_{0}^{t} \Vert n\left(s\right)\Vert_{L^{\infty}}\Vert u\left(s\right)\Vert_{W^{1,r}}^{r+1}ds.
	\end{align}
	Now using \eqref{(5.4)} and \eqref{(5.5)}, one gets that
	\begin{align}\label{(5.6)}
		\Vert u\Vert_{W^{1,r}}^r \le \Vert u_{0,x}\Vert_{L^r}^r+\left(\frac{r}{2}\right)^{\frac{1}{r}}\left(2r+1+14\left(r-1\right)^{\frac{r-1}{r}}\right)\int_{0}^{t} \Vert n\left(s\right)\Vert_{L^{\infty}}\Vert u\left(s\right)\Vert_{W^{1,r}}^{r+1}ds.
	\end{align}
	Together with \eqref{(5.3)} and \eqref{(5.6)} gives
	\begin{align*}
		\Vert n\Vert_{L^{\infty}}+\Vert u\Vert_{W^{1,r}}^r \le \Vert n_0\Vert_{L^{\infty}}+\Vert u_0\Vert_{W^{1,r}}&+\int_{0}^{t} \left(1+\frac{2}{r}\right)\left(\frac{r}{2}\right)^{\frac{1}{r}}\Vert n\left(s\right)\Vert^2_{L^{\infty}}\Vert u\left(s\right)\Vert_{W^{1,r}}\\&+\left(\frac{r}{2}\right)^{\frac{1}{r}}\left(2r+1+14\left(r-1\right)^{\frac{r-1}{r}}\right)\Vert n\left(s\right)\Vert_{L^{\infty}}\Vert u\left(s\right)\Vert_{W^{1,r}}^{r+1}ds. 
	\end{align*}
	Therefore, we have
	\begin{align*}
		\Vert n\Vert_{L^{\infty}}+\Vert u\Vert_{W^{1,r}}^r \le \Vert n_0\Vert_{L^{\infty}}+\Vert u_0\Vert_{W^{1,r}}^r+\left(\frac{r}{2}\right)^{\frac{1}{r}}\left(2r+2+\frac{2}{r}+14\left(r-1\right)^{\frac{r-1}{r}}\right)\int_{0}^{t} \left(\Vert n\Vert_{L^{\infty}}+\Vert u\Vert_{W^{1,r}}^{r}\right)^{\frac{2r+1}{r}}ds.
	\end{align*}
	Now we obtain
	\begin{align*}
		\Vert u\Vert_{W^{1,r}}^r+\Vert n\Vert_{L^{\infty}} \le 2\left(\Vert u_0\Vert_{W^{1,r}}^r+\Vert n_0\Vert_{L^{\infty}}\right),
	\end{align*}
	with
	\begin{align*}
		t \le T_4=\frac{1}{8\left(r^{\frac{1+r}{r}}+r^{\frac{1}{r}}+r^{\frac{1-r}{r}}+7r^{\frac{1}{r}}\left(r-1\right)^{\frac{r-1}{r}}\right)\left(\Vert u_0\Vert_{W^{1,r}}^r+\Vert n_0\Vert_{L^{\infty}}\right)^{\frac{r+1}{r}}}.
	\end{align*}
	The proof is thus complete.
\end{proof}

\begin{prop}\label{Proposition 5.2}
	Assume that $u_0 \in W^{1,r}\left(\mathbb{R}\right)$, $v_0 \in L^{\infty}\left(\mathbb{R}\right)$ and there exists a point $x_0$ such that $v_0\left(x_0\right) > 0.$ Let $T_3$ be the maximal existence time of the corresponding strong solution $\left(u,v\right)$ to system \eqref{(1.5)} with $b=1+\frac{2}{r}$. Then we have
	\begin{align*}
		v \ge \frac{v_0\left(x_0\right)}{2},
	\end{align*}
	with
	\begin{align*}
		t \le T_5=\frac{v_0\left(x_0\right)}{8\left(r^{\frac{1+r}{r}}+r^{\frac{1}{r}}+r^{\frac{1-r}{r}}+7r^{\frac{1}{r}}\left(r-1\right)^{\frac{r-1}{r}}\right)\left(\Vert u_0\Vert_{W^{1,r}}^r+\Vert n_0\Vert_{L^{\infty}}\right)^{\frac{2r+1}{r}}} \le T_4.
	\end{align*} 
\end{prop}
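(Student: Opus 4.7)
\textbf{Proof proposal for Proposition \ref{Proposition 5.2}.} The plan is to parallel the proof of Proposition \ref{Proposition 4.2} verbatim, replacing the $(W^{1,1}, L^{\infty})$ control from Proposition \ref{Proposition 4.1} with the $(W^{1,r}, L^{\infty})$ control just obtained in Proposition \ref{Proposition 5.1}, and tracking the $r$-dependent constants carefully so that the denominator in $T_5$ comes out exactly as stated. The characteristics $q(t,x)$ solving \eqref{(5.2)} are already in hand, and $x \mapsto q(t,x)$ is an increasing diffeomorphism of $\mathbb{R}$, so it suffices to estimate the material derivative $\frac{d}{dt} v(t,q(t,x)) = v_t + uv\,v_x$ along the characteristic through $x_0$.

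First I would rewrite the second equation of \eqref{(1.5)} with $b = 1 + \tfrac{2}{r}$ in a form amenable to convolution estimates. Using the identity
\[
p*(v\,v_x u_{xx}) = p_x*(v\,v_x u_x) - p*(v_x^2 u_x) - p*(v\,v_{xx} u_x)
\]
(which follows from $(p*g)' = p_x*g = p*g'$ and the product rule), the five convolution terms in \eqref{(1.5)} collapse to a combination of $p*(uvv_x)$, $p*(uv_x v_{xx})$, $p*(vu_x v_{xx})$, $p*(v_x^2 u_x)$, and $p_x*(vv_x u_x)$, each with coefficients linear in $1,\, 2/r,\, (r-1)/r$. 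Then I would bound the $L^{\infty}$ norm of each term by Young's inequality in the form $\|p*f\|_{L^{\infty}} \le \|p\|_{L^{r/(r-1)}} \|f\|_{L^{r}}$ (and similarly with $p_x$), distributing the factors so that exactly one of $u$ or $u_x$ is placed in $L^r$ and the remaining $v$-factors are placed in $L^{\infty}$ via $\|v\|_{L^{\infty}}, \|v_x\|_{L^{\infty}} \le \|n\|_{L^{\infty}}$ and $\|v_{xx}\|_{L^{\infty}} \le 2\|n\|_{L^{\infty}}$. Wherever an isolated $\|u\|_{L^{\infty}}$ appears, I would use Proposition \ref{Proposition 2.7} to replace it by $\bigl(\tfrac{r}{2}\bigr)^{1/r} \|u\|_{W^{1,r}}$. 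After summing, this yields an estimate of the shape
\[
\Bigl|\tfrac{d}{dt} v(t,q(t,x))\Bigr| \le C(r)\,\|n\|_{L^{\infty}}^{2}\,\|u\|_{W^{1,r}},
\]
where $C(r)$ is precisely the constant $(r^{(1+r)/r} + r^{1/r} + r^{(1-r)/r} + 7 r^{1/r}(r-1)^{(r-1)/r})$ appearing in $T_5$.

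Integrating from $0$ to $t$, evaluating at $x = x_0$, and invoking Proposition \ref{Proposition 5.1} on $[0, T_4]$ to bound $\|n(s)\|_{L^{\infty}}^{2}\|u(s)\|_{W^{1,r}} \le 4 \cdot 2^{1/r}\bigl(\|n_0\|_{L^{\infty}} + \|u_0\|_{W^{1,r}}^{r}\bigr)^{(2r+1)/r}$ gives
\[
v(t,q(t,x_0)) \ge v_0(x_0) - 4\cdot 2^{1/r}\,C(r)\,t\,\bigl(\|n_0\|_{L^{\infty}} + \|u_0\|_{W^{1,r}}^{r}\bigr)^{(2r+1)/r}.
\]
Imposing that the subtracted term be at most $v_0(x_0)/2$ produces exactly the stated $T_5$. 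The comparison $T_5 \le T_4$ is then automatic since $v_0(x_0) \le \|v_0\|_{L^{\infty}} \le \|n_0\|_{L^{\infty}} \le \|n_0\|_{L^{\infty}} + \|u_0\|_{W^{1,r}}^{r}$, so the conclusion $v(t,q(t,x_0)) \ge v_0(x_0)/2$ holds on the full interval $[0,T_5]$.

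The analytic structure is a routine transcription of Proposition \ref{Proposition 4.2}; no genuinely new idea is needed. The one place that requires real care is the constant-tracking in the Young-inequality step: each of the five convolutions pulls down a slightly different power of $r$ (through $(r/2)^{1/r}$, $((r-1)/r)^{(r-1)/r}$, and the coefficients $b = 1 + 2/r$ and $3 - b = 2(r-1)/r$), and these must collect into the specific expression $r^{(1+r)/r} + r^{1/r} + r^{(1-r)/r} + 7 r^{1/r}(r-1)^{(r-1)/r}$ that sits in the denominator of $T_5$. Keeping this bookkeeping consistent with the parallel computation already performed inside the proof of Proposition \ref{Proposition 5.1} is the only real obstacle.
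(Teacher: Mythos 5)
Your proposal matches the paper's proof essentially step for step: the paper likewise rewrites the $v$-equation of \eqref{(1.5)} via the convolution identity $p*(vv_xu_{xx})=p_x*(vv_xu_x)-p*(v_x^2u_x)-p*(vv_{xx}u_x)$, bounds each term by Young's inequality with the $L^{\frac{r}{r-1}}$--$L^r$ pairing and $\Vert v\Vert_{L^{\infty}},\Vert v_x\Vert_{L^{\infty}}\le\Vert n\Vert_{L^{\infty}}$, $\Vert v_{xx}\Vert_{L^{\infty}}\le 2\Vert n\Vert_{L^{\infty}}$, integrates along the characteristic through $x_0$, and closes with Proposition \ref{Proposition 5.1} and the diffeomorphism property. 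The one cosmetic discrepancy is your claim that the Young step yields ``precisely'' the denominator constant of $T_5$: the paper's intermediate bound is $\left(4-\frac{1}{r}\right)\left(\frac{2r-2}{r}\right)^{\frac{r-1}{r}}\Vert n\Vert_{L^{\infty}}^{2}\Vert u\Vert_{W^{1,r}}$, and the denominator of $T_5$ is simply inherited from $T_4$ and checked to dominate, but this is constant bookkeeping rather than a gap.
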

\begin{proof}
	Consider the system \eqref{(1.5)} along the characteristics $q\left(t,x\right)$ with $b=1+\frac{2}{r}$, we have
	\begin{align*}
		v_t+p*\left(\left(1+\frac{2}{r}\right)uvv_x+\left(2-\frac{2}{r}\right)uv_xv_{xx}+u_xv_x^2+vu_xv_{xx}\right)+p_x*vv_xu_x=0.
	\end{align*} 
	By the Young's inequality, one gets
	\begin{align*}
		&\Vert p*uv_xv\Vert_{L^{\infty}} \le \Vert v\Vert_{L^{\infty}}\Vert v_x\Vert_{L^{\infty}}\Vert p\Vert_{L^{\frac{r}{r-1}}}\Vert u\Vert_{L^{r}},
		\\&\Vert p*uv_xv\Vert_{L^{\infty}} + \Vert p*uv_{xx}v_x\Vert_{L^{\infty}} \le \left(\Vert v_x\Vert_{L^{\infty}}\Vert v_{xx}\Vert_{L^{\infty}} + \Vert v\Vert_{L^{\infty}}\Vert v_x\Vert_{L^{\infty}}\right)\Vert p\Vert_{L^{\frac{r}{r-1}}}\Vert u\Vert_{L^{r}},
		\\&\Vert p*u_xv_xv_x\Vert_{L^{\infty}} + \Vert p*u_xv_{xx}v\Vert_{L^{\infty}}  \le \left(\Vert v_x\Vert_{L^{\infty}}\Vert v_x\Vert_{L^{\infty}} + \Vert v_{xx}\Vert_{L^{\infty}}\Vert v\Vert_{L^{\infty}}\right)\Vert p\Vert_{L^{\frac{r}{r-1}}}\Vert u_x\Vert_{L^{r}},
		\\&\Vert p_x*u_xv_xv\Vert_{L^{\infty}} \le \Vert p_x\Vert_{L^{\frac{r}{r-1}}}\Vert v_x\Vert_{L^{\infty}}\Vert v\Vert_{L^{\infty}}\Vert u_x\Vert_{L^{r}}.
	\end{align*}
	Hence,
	\begin{align*}
		|v_t(t,q(t,x))| \le \left(4-\frac{1}{r}\right)\left(\frac{2r-2}{r}\right)^{\frac{r-1}{r}}\Vert n\left(t\right)\Vert^2_{L^{\infty}}\Vert u\left(t\right)\Vert_{W^{1,r}}.
	\end{align*}
	It is easy to check that integrating the above inequality with respect to $t$, we deduce that
	\begin{align*}
		&v \ge v_0 -\left(4-\frac{1}{r}\right)\left(\frac{2r-2}{r}\right)^{\frac{r-1}{r}}\int_{0}^{t} \Vert n\left(s\right)\Vert^2_{L^{\infty}}\Vert u\left(s\right)\Vert_{W^{1,1}}ds,
		\\&v \le v_0 + \left(4-\frac{1}{r}\right)\left(\frac{2r-2}{r}\right)^{\frac{r-1}{r}}\int_{0}^{t} \Vert n\left(s\right)\Vert^2_{L^{\infty}}\Vert u\left(s\right)\Vert_{W^{1,1}}ds. 
	\end{align*}
	Together with the Proposition \ref{Proposition 5.1} and the fact that $q(t,x)$ is diffeomorphism of $\mathbb{R}$, we obtain that
	\begin{align*}
		v(t,q(t,x_0)) \ge \frac{v_0\left(x_0\right)}{2},
	\end{align*}
	with
	\begin{align*}
		t \le T_5=\frac{v_0\left(x_0\right)}{8\left(r^{\frac{1+r}{r}}+r^{\frac{1}{r}}+r^{\frac{1-r}{r}}+7r^{\frac{1}{r}}\left(r-1\right)^{\frac{r-1}{r}}\right)\left(\Vert u_0\Vert_{W^{1,r}}^r+\Vert n_0\Vert_{L^{\infty}}\right)^{\frac{2r+1}{r}}} \le T_4,
	\end{align*}
	This completes the proof.
\end{proof}

Now, we are in a position to show the blow-up phenomenon of system \eqref{(1.5)}.\\
\textbf{The proof of Theorem \ref{Theorem 1.4}}:\\
 Differentiating the system \eqref{(1.5)} to $x$, we have
	\begin{align*}
		u_{xt}+\frac{1}{r}vu_x^2-uv_xu_x+vuu_{xx}&+p_x*\left(\left(1+\frac{2}{r}\right)vuu_x-uu_xv_{xx}-\left(1-\frac{1}{r}\right)v_xu_x^2\right)\\&+p*\left(2uv_xu_x+\left(1-\frac{1}{r}\right)vu_x^2\right)=0.
	\end{align*}
	Hence, we obtain that
\begin{align*}
	&\left( u_{xt} + \frac{1}{r}vu_x^2 - uv_xu_x + p_x*\left( \left(1+\frac{2}{r}\right)vuu_x - uu_xv_{xx} - \left(1-\frac{1}{r}\right)v_xu_x^2 \right) \right. \\
	&\qquad\qquad\qquad\qquad \left. + p*\left( 2uv_xu_x + \left(1-\frac{1}{r}\right)vu_x^2 \right) \right)(t, q(t,x)) = 0.
\end{align*}
	By the Young's inequality, one gets
	\begin{align*}
		&2\Vert p*uv_xu_x\Vert_{L^p} + \left(1-\frac{1}{r}\right)\Vert p*u_xvu_x\Vert_{L^p} \le \left(2\Vert u\Vert_{L^{r}}\Vert v_x\Vert_{L^{\infty}} + \left(1-\frac{1}{r}\right)\Vert u_x\Vert_{L^{r}}\Vert v\Vert_{L^{\infty}} \right)\Vert p\Vert_{L^{\frac{r}{r-1}}}\Vert u_x\Vert_{L^r},
		\\&\Vert p_x*u_xv_xu_x\Vert_{L^p} \le \Vert p_x\Vert_{L^{\frac{r}{r-1}}}\Vert u_x\Vert_{L^{r}}\Vert v_x\Vert_{L^{\infty}}\Vert u_x\Vert_{L^r},
		\\&\left(1+\frac{2}{r}\right)\Vert p_x*uvu_x\Vert_{L^p} + \Vert p_x*uv_{xx}u_x\Vert_{L^p} \le \left(\left(1+\frac{2}{r}\right)\Vert v\Vert_{L^{\infty}} + \Vert v_{xx}\Vert_{L^{\infty}}\right)\Vert p_x\Vert_{L^{\frac{r}{r-1}}}\Vert u\Vert_{L^{r}}\Vert u_x\Vert_{L^r}.
	\end{align*}
	Together with the Proposition \ref{Proposition 2.7}, Proposition \ref{Proposition 5.1} and Proposition \ref{Proposition 5.2}, we derive that
	\begin{align*}
		\frac{\mathrm{d}u_x\left(t,q\left(t,x_0\right)\right)}{\mathrm{d}t}  &\le -\frac{v_0\left(x_0\right)}{4r}u_x^2+\frac{1}{v_0\left(x_0\right)}\Vert n\Vert_{L^{\infty}}^2\Vert u\Vert^2_{L^{\infty}}+7\left(2\right)^{-\frac{1}{r}}\left(\frac{r-1}{r}\right)^{\frac{r-1}{r}}\Vert n\Vert_{L^{\infty}}\Vert u\Vert^2_{L^{\infty}}
		\\ &\le -\frac{v_0\left(x_0\right)}{4r}u_x^2+\frac{1}{v_0\left(x_0\right)}\left(\frac{r}{2}\right)^{\frac{2}{r}}\Vert n\Vert_{L^{\infty}}^2\Vert u\Vert^2_{W^{1,r}}+7\left(2\right)^{-\frac{1}{r}}\left(\frac{r-1}{r}\right)^{\frac{r-1}{r}}\Vert n\Vert_{L^{\infty}}\Vert u\Vert^2_{L^{\infty}}
		\\&\le -a_2f_2+c_2,
	\end{align*}
	where
	\begin{align*}
		&f_2:=u_x\left(t,q\left(t,x_0\right)\right),
		\\&a_2:=\frac{v_0(x_0)}{4r},
	\end{align*}
	and
	\begin{align*}
		c_2:=\frac{4}{v_0\left(x_0\right)}r^{\frac{2}{r}}\left(\Vert n_0\Vert_{L^{\infty}}+\Vert u_0\Vert_{W^{1,r}}^r\right)^{\frac{2r+2}{r}}+14\left(\frac{r^2}{2}\right)^{\frac{1}{r}}\left(\frac{r-1}{r}\right)^{\frac{r-1}{r}}\left(\Vert n_0\Vert_{L^{\infty}}+\Vert u_0\Vert_{W^{1,r}}^r\right)^{\frac{r+2}{r}}.
	\end{align*} 
	Thanks to \eqref{(1.6)}, we then get
	\begin{align*}
		\frac{1}{\sqrt{c_2v_0\left(x_0\right)}}\ln\left(\frac{\sqrt{v_0\left(x_0\right)}f_2\left(0\right)-\sqrt{c_2}}{\sqrt{v_0\left(x_0\right)}f_2\left(0\right)+\sqrt{c_2}}\right) \le T_5.
	\end{align*}
	Applying Lemma \ref{Lemma 2.8}, we have
	\begin{align*}
		\lim_{t\rightarrow T_3}f_2(t)=-\infty,
	\end{align*}
	with
	\begin{align*}
		T_3 \le \frac{1}{\sqrt{c_2v_0\left(x_0\right)}}\ln\left(\frac{\sqrt{v_0\left(x_0\right)}f_2\left(0\right)-\sqrt{c_2}}{\sqrt{v_0\left(x_0\right)}f_2\left(0\right)+\sqrt{c_2}}\right) \le T_5,
	\end{align*}
	which along with Lemma \ref{Lemma 2.8} yields the desired result.
	$\hfill\square$

\begin{rema}
	For the variable $v$, we can also get a similar result.
\end{rema}

\noindent\textbf{Conflict of interest}: This work does not have any conflicts of interest.\\
\noindent {\bf Acknowledgments.} Z. Yin was supported by the National Natural Science Foundation of China (No.12571261).

%(No.11671407 and No.11701586), the Macao Science and Technology Development Fund (No. 098/2013/A3), and Guangdong Province of China Special Support Program (No. 8-2015),
%and the key project of the Natural Science Foundation of Guangdong province (No. 2016A030311004).

%The authors thank the referee for valuable comments and suggestions.

\phantomsection
\addcontentsline{toc}{section}{\refname}
%Ìí¼Ó²Î¿¼ÎÄÏ×µ½ÊéÇ©£¬ºê°ü hyperref
\bibliographystyle{abbrv} %plain ,%alpha, %abbrv
\bibliography{CMref}

@article{holm1998euler,
 AUTHOR = {Holm, Darryl D. and Marsden, Jerrold E. and Ratiu, Tudor S.},
     TITLE = {The {E}uler-{P}oincar\'e{} equations and semidirect products
              with applications to continuum theories},
   JOURNAL = {Adv. Math.},
  FJOURNAL = {Advances in Mathematics},
    VOLUME = {137},
      YEAR = {1998},
    NUMBER = {1},
     PAGES = {1--81},
      ISSN = {0001-8708,1090-2082},
   MRCLASS = {58F05 (58E30 58F40 70H30 73B99 73V25)},
  MRNUMBER = {1627802},
MRREVIEWER = {Jair\ Koiller},
       DOI = {10.1006/aima.1998.1721},
       URL = {https://doi.org/10.1006/aima.1998.1721},
}

@article{bressan2007global,
  AUTHOR = {Bressan, Alberto and Constantin, Adrian},
     TITLE = {Global conservative solutions of the {C}amassa-{H}olm
              equation},
   JOURNAL = {Arch. Ration. Mech. Anal.},
  FJOURNAL = {Archive for Rational Mechanics and Analysis},
    VOLUME = {183},
      YEAR = {2007},
    NUMBER = {2},
     PAGES = {215--239},
      ISSN = {0003-9527,1432-0673},
   MRCLASS = {35Q53 (35L45)},
  MRNUMBER = {2278406},
MRREVIEWER = {Enzo\ Vitillaro},
       DOI = {10.1007/s00205-006-0010-z},
       URL = {https://doi.org/10.1007/s00205-006-0010-z},
}

@article{chen2016blowup,
  AUTHOR = {Chen, Robin Ming and Guo, Fei and Liu, Yue and Qu, Changzheng},
     TITLE = {Analysis on the blow-up of solutions to a class of integrable
              peakon equations},
   JOURNAL = {J. Funct. Anal.},
  FJOURNAL = {Journal of Functional Analysis},
    VOLUME = {270},
      YEAR = {2016},
    NUMBER = {6},
     PAGES = {2343--2374},
      ISSN = {0022-1236,1096-0783},
   MRCLASS = {35Q53 (35B44 35G25)},
  MRNUMBER = {3460243},
MRREVIEWER = {Corentin\ Audiard},
       DOI = {10.1016/j.jfa.2016.01.017},
       URL = {https://doi.org/10.1016/j.jfa.2016.01.017},
}

@article{constantin2010inverse, 
AUTHOR = {Constantin, Adrian and Ivanov, Rossen I. and Lenells, Jonatan},
     TITLE = {Inverse scattering transform for the {D}egasperis-{P}rocesi
              equation},
   JOURNAL = {Nonlinearity},
  FJOURNAL = {Nonlinearity},
    VOLUME = {23},
      YEAR = {2010},
    NUMBER = {10},
     PAGES = {2559--2575},
      ISSN = {0951-7715,1361-6544},
   MRCLASS = {37K15 (34A55 34L25 35P25 35Q53 35R30 37K20)},
  MRNUMBER = {2683782},
MRREVIEWER = {Dmitry\ G.\ Shepelsky},
       DOI = {10.1088/0951-7715/23/10/012},
       URL = {https://doi.org/10.1088/0951-7715/23/10/012},
}

@article{constantin2009hydrodynamical,
 AUTHOR = {Constantin, Adrian and Lannes, David},
     TITLE = {The hydrodynamical relevance of the {C}amassa-{H}olm and
              {D}egasperis-{P}rocesi equations},
   JOURNAL = {Arch. Ration. Mech. Anal.},
  FJOURNAL = {Archive for Rational Mechanics and Analysis},
    VOLUME = {192},
      YEAR = {2009},
    NUMBER = {1},
     PAGES = {165--186},
      ISSN = {0003-9527,1432-0673},
   MRCLASS = {35Q53 (35R35 74J30 76A02 76B15)},
  MRNUMBER = {2481064},
MRREVIEWER = {Bengt\ O.\ Enflo},
       DOI = {10.1007/s00205-008-0128-2},
       URL = {https://doi.org/10.1007/s00205-008-0128-2},
}

@article{degasperis2002new,
  AUTHOR = {Degasperis, A. and Kholm, D. D. and Khon, A. N. I.},
     TITLE = {A new integrable equation with peakon solutions},
   JOURNAL = {Teoret. Mat. Fiz.},
  FJOURNAL = {Teoreticheskaya i Matematicheskaya Fizika},
    VOLUME = {133},
      YEAR = {2002},
    NUMBER = {2},
     pages = {170--183},
      ISSN = {0564-6162,2305-3135},
   MRCLASS = {37K10 (35Q53)},
  MRNUMBER = {2001531},
       DOI = {10.1023/A:1021186408422},
       URL = {https://doi.org/10.1023/A:1021186408422},
}

@article{degasperis1999asymptotic,
  title={Asymptotic integrability},
  author={Degasperis, Antonio and Procesi, Michela and others},
  journal={Symmetry and perturbation theory},
  volume={1},
  number={1},
  pages={23--37},
  year={1999},
  publisher={Rome}
}

@article{himonas2012cauchy,
   AUTHOR = {Himonas, A. Alexandrou and Holliman, Curtis},
     TITLE = {The {C}auchy problem for the {N}ovikov equation},
   JOURNAL = {Nonlinearity},
  FJOURNAL = {Nonlinearity},
    VOLUME = {25},
      YEAR = {2012},
    NUMBER = {2},
     PAGES = {449--479},
      ISSN = {0951-7715,1361-6544},
   MRCLASS = {35Q53 (35A35 35B30)},
  MRNUMBER = {2876876},
MRREVIEWER = {Luiz\ Gustavo\ Farah},
       DOI = {10.1088/0951-7715/25/2/449},
       URL = {https://doi.org/10.1088/0951-7715/25/2/449},
}

@article{himonas2013holder,
   AUTHOR = {Himonas, A. Alexandrou and Holmes, John},
     TITLE = {H\"older continuity of the solution map for the {N}ovikov
              equation},
   JOURNAL = {J. Math. Phys.},
  FJOURNAL = {Journal of Mathematical Physics},
    VOLUME = {54},
      YEAR = {2013},
    NUMBER = {6},
     PAGES = {061501, 11},
      ISSN = {0022-2488,1089-7658},
   MRCLASS = {35Q53 (35B65)},
  MRNUMBER = {3112520},
MRREVIEWER = {Alessandro\ Arsie},
       DOI = {10.1063/1.4807729},
       URL = {https://doi.org/10.1063/1.4807729},
}

@article{coclite2006well,
   AUTHOR = {Coclite, Giuseppe M. and Karlsen, Kenneth H.},
     TITLE = {On the well-posedness of the {D}egasperis-{P}rocesi equation},
   JOURNAL = {J. Funct. Anal.},
  FJOURNAL = {Journal of Functional Analysis},
    VOLUME = {233},
      YEAR = {2006},
    NUMBER = {1},
     PAGES = {60--91},
      ISSN = {0022-1236,1096-0783},
   MRCLASS = {35Q53 (35B30 35D05 35L65 37K10)},
  MRNUMBER = {2204675},
MRREVIEWER = {John\ Albert},
       DOI = {10.1016/j.jfa.2005.07.008},
       URL = {https://doi.org/10.1016/j.jfa.2005.07.008},
}

@article{escher2009initial,
   AUTHOR = {Escher, Joachim and Yin, Zhaoyang},
     TITLE = {Initial boundary value problems for nonlinear dispersive wave
              equations},
   JOURNAL = {J. Funct. Anal.},
  FJOURNAL = {Journal of Functional Analysis},
    VOLUME = {256},
      YEAR = {2009},
    NUMBER = {2},
     PAGES = {479--508},
      ISSN = {0022-1236,1096-0783},
   MRCLASS = {35Q53},
  MRNUMBER = {2476950},
MRREVIEWER = {Sebastian\ Herr},
       DOI = {10.1016/j.jfa.2008.07.010},
       URL = {https://doi.org/10.1016/j.jfa.2008.07.010},
}

@article{novikov2009generalizations,
   AUTHOR = {Novikov, Vladimir},
     TITLE = {Generalizations of the {C}amassa-{H}olm equation},
   JOURNAL = {J. Phys. A},
  FJOURNAL = {Journal of Physics. A. Mathematical and Theoretical},
    VOLUME = {42},
      YEAR = {2009},
    NUMBER = {34},
     PAGES = {342002, 14},
      ISSN = {1751-8113,1751-8121},
   MRCLASS = {35Q53},
  MRNUMBER = {2530232},
       DOI = {10.1088/1751-8113/42/34/342002},
       URL = {https://doi.org/10.1088/1751-8113/42/34/342002},
}

@article{geng2009extension,
  AUTHOR = {Geng, Xianguo and Xue, Bo},
     TITLE = {An extension of integrable peakon equations with cubic
              nonlinearity},
   JOURNAL = {Nonlinearity},
  FJOURNAL = {Nonlinearity},
    VOLUME = {22},
      YEAR = {2009},
    NUMBER = {8},
     PAGES = {1847--1856},
      ISSN = {0951-7715,1361-6544},
   MRCLASS = {37K10 (35Q51 35Q53 37K05)},
  MRNUMBER = {2525813},
MRREVIEWER = {Yoshimasa\ Matsuno},
       DOI = {10.1088/0951-7715/22/8/004},
       URL = {https://doi.org/10.1088/0951-7715/22/8/004},
}

@article{hou2013algebro,
  AUTHOR = {Hou, Yu and Zhao, Peng and Fan, Engui and Qiao, Zhijun},
     TITLE = {Algebro-geometric solutions for the {D}egasperis-{P}rocesi
              hierarchy},
   JOURNAL = {SIAM J. Math. Anal.},
  FJOURNAL = {SIAM Journal on Mathematical Analysis},
    VOLUME = {45},
      YEAR = {2013},
    NUMBER = {3},
     PAGES = {1216--1266},
      ISSN = {0036-1410,1095-7154},
   MRCLASS = {35Q53},
  MRNUMBER = {3049654},
MRREVIEWER = {Ademir\ Fernando\ Pazoto},
       DOI = {10.1137/12089689X},
       URL = {https://doi.org/10.1137/12089689X},
}

@article{qiao2004integrable,
  AUTHOR = {Qiao, Zhijun},
     TITLE = {Integrable hierarchy, {$3\times3$} constrained systems, and
              parametric solutions},
   JOURNAL = {Acta Appl. Math.},
  FJOURNAL = {Acta Applicandae Mathematicae},
    VOLUME = {83},
      YEAR = {2004},
    NUMBER = {3},
     PAGES = {199--220},
      ISSN = {0167-8019,1572-9036},
   MRCLASS = {37K10 (35Q35 35Q53)},
  MRNUMBER = {2092400},
MRREVIEWER = {Vladislav\ G.\ Dubrovsky},
       DOI = {10.1023/B:ACAP.0000038872.88367.dd},
       URL = {https://doi.org/10.1023/B:ACAP.0000038872.88367.dd},
}

@article{yan2018new,
   AUTHOR = {Yan, Kai and Qiao, Zhijun and Zhang, Yufeng},
     TITLE = {On a new two-component {$b$}-family peakon system with cubic
              nonlinearity},
   JOURNAL = {Discrete Contin. Dyn. Syst.},
  FJOURNAL = {Discrete and Continuous Dynamical Systems},
    VOLUME = {38},
      YEAR = {2018},
    NUMBER = {11},
     PAGES = {5415--5442},
      ISSN = {1078-0947,1553-5231},
   MRCLASS = {35F55 (35C08 35G25)},
  MRNUMBER = {3917775},
       DOI = {10.3934/dcds.2018239},
       URL = {https://doi.org/10.3934/dcds.2018239},
}

@article{yin2003cauchy,
  AUTHOR = {Yin, Zhaoyang},
     TITLE = {On the {C}auchy problem for an integrable equation with peakon
              solutions},
   JOURNAL = {Illinois J. Math.},
  FJOURNAL = {Illinois Journal of Mathematics},
    VOLUME = {47},
      YEAR = {2003},
    NUMBER = {3},
     PAGES = {649--666},
      ISSN = {0019-2082,1945-6581},
   MRCLASS = {35Q58 (35D05 35G25 37K40)},
  MRNUMBER = {2007229},
MRREVIEWER = {Maciej\ B\l aszak},
       URL = {http://projecteuclid.org/euclid.ijm/1258138186},
}

@article{yin2004global,
 AUTHOR = {Yin, Zhaoyang},
     TITLE = {Global weak solutions for a new periodic integrable equation
              with peakon solutions},
   JOURNAL = {J. Funct. Anal.},
  FJOURNAL = {Journal of Functional Analysis},
    VOLUME = {212},
      YEAR = {2004},
    NUMBER = {1},
     PAGES = {182--194},
      ISSN = {0022-1236,1096-0783},
   MRCLASS = {35Q53 (35B10 35D05 37K10)},
  MRNUMBER = {2065241},
MRREVIEWER = {Pavel\ Krej\v c\'i},
       DOI = {10.1016/j.jfa.2003.07.010},
       URL = {https://doi.org/10.1016/j.jfa.2003.07.010},
}

@article{camassa1993integrable,
  AUTHOR = {Camassa, Roberto and Holm, Darryl D.},
     TITLE = {An integrable shallow water equation with peaked solitons},
   JOURNAL = {Phys. Rev. Lett.},
  FJOURNAL = {Physical Review Letters},
    VOLUME = {71},
      YEAR = {1993},
    NUMBER = {11},
     PAGES = {1661--1664},
      ISSN = {0031-9007,1079-7114},
   MRCLASS = {35Q51 (58F07 76B15 76B25)},
  MRNUMBER = {1234453},
       DOI = {10.1103/PhysRevLett.71.1661},
       URL = {https://doi.org/10.1103/PhysRevLett.71.1661},
}

@article{camassa1994new,
  AUTHOR  = {Camassa, R. and Holm, D. D. and Hyman, J. M.},
     TITLE  = {A new integrable shallow water equation},
   JOURNAL = {Adv. Appl. Mech.},
    VOLUME  = {31},
      YEAR  = {1994},
     PAGES  = {1--33},
       DOI  = {10.1016/S0065-2156(08)70254-0},
}

@article{danchin2001remarks,
 AUTHOR = {Danchin, Rapha\"el},
     TITLE = {A few remarks on the {C}amassa-{H}olm equation},
   JOURNAL = {Differential Integral Equations},
  FJOURNAL = {Differential and Integral Equations. An International Journal
              for Theory \& Applications},
    VOLUME = {14},
      YEAR = {2001},
    NUMBER = {8},
     PAGES = {953--988},
      ISSN = {0893-4983},
   MRCLASS = {35Q53 (35B30 35B40 35G25 76B03 76B15)},
  MRNUMBER = {1827098},
MRREVIEWER = {Kenji\ Nakanishi},
}

@article{himonas2016novikov,
  AUTHOR = {Himonas, A. Alexandrou and Mantzavinos, Dionyssios},
     TITLE = {The initial value problem for a {N}ovikov system},
   JOURNAL = {J. Math. Phys.},
  FJOURNAL = {Journal of Mathematical Physics},
    VOLUME = {57},
      YEAR = {2016},
    NUMBER = {7},
     PAGES = {071503, 21},
      ISSN = {0022-2488,1089-7658},
   MRCLASS = {35Q53 (35B30 35C07)},
  MRNUMBER = {3529006},
MRREVIEWER = {Ademir\ Pastor},
       DOI = {10.1063/1.4959774},
       URL = {https://doi.org/10.1063/1.4959774},
}

@book{BCD,
  author    = {Bahouri, H. and Chemin, J.-Y. and Danchin, R.},
  title     = {Fourier Analysis and Nonlinear Partial Differential Equations},
  publisher = {Springer},
  address   = {Heidelberg},
  year      = {2011},
  series    = {Grundlehren der Mathematischen Wissenschaften},
  volume    = {343},
  isbn      = {978-3-642-16830-7},
  doi       = {10.1007/978-3-642-16831-4}
}

@article{Li2025,
  AUTHOR  = {Li, J. L. and Yu, Y. H. and Zhu, W. P.},
  TITLE   = {Ill-posedness for the periodic {C}amassa-{H}olm type equations in critical Besov spaces},
  JOURNAL = {Ann. Mat. Pura Appl. (1923-)},
  YEAR    = {2025},
  DOI     = {10.1007/s10231-025-01638-0},
}

@article {qiao2019persistence,
    AUTHOR = {Chen, Rong and Qiao, Zhijun and Zhou, Shouming},
     TITLE = {Persistence properties and wave-breaking criteria for the
              {G}eng-{X}ue system},
   JOURNAL = {Math. Methods Appl. Sci.},
  FJOURNAL = {Mathematical Methods in the Applied Sciences},
    VOLUME = {42},
      YEAR = {2019},
    NUMBER = {18},
     PAGES = {6999--7010},
      ISSN = {0170-4214,1099-1476},
   MRCLASS = {35Q53 (35A01 35B44 35D35)},
  MRNUMBER = {4037948},
       DOI = {10.1002/mma.5805},
       URL = {https://doi.org/10.1002/mma.5805},
}

\end{document}